\theoremstyle{plain}
\newtheorem{thm}{Theorem}[section]
\newtheorem{lem}[thm]{Lemma}
\newtheorem{prop}[thm]{Proposition}
\newtheorem{question}[thm]{Question}
\newtheorem{cor}[thm]{Corollary}
\newtheorem{rem}[thm]{Remark}
\newtheorem{conj}[thm]{Conjecture}
\newtheorem*{claim}{Claim}
\newcommand{\co}{\colon\thinspace}
\newcommand{\C}{\mathbb{C}}
\newcommand{\Q}{\mathbb{Q}}
\newcommand{\R}{\mathbb{R}}
\newcommand{\Z}{\mathbb{Z}}
\newcommand{\OO}{\mathbb{O}}
\newcommand{\PSLTC}{PSL(2,\C)}
\newcommand{\PSLTOTh}{PSL(2,\OO_3)}
\newcommand{\PSLTOone}{PSL(2,\OO_1)}
\newcommand{\PGLTOone}{PGL(2,\OO_1)}
\newcommand{\PSLTOD}{PSL(2,\OO_d)}
\newcommand{\Sth}{\mathbb{S}^3}
\newcommand{\Hth}{\mathbb{H}^3}
\newcommand{\orb}{\mathcal{O}}
\newcommand{\orbQ}{\mathcal{Q}}
\definecolor{bettergreen}{rgb}{0,0.6,0.4}
\definecolor{purple}{rgb}{0.4,0,0.6}
\begin{document}

\title{Cusp types of quotients of hyperbolic knot complements}

\author{ Neil R. Hoffman}
 \address{Department of Mathematics, Oklahoma State University, Stillwater, OK}
\email[]{neil.r.hoffman@okstate.edu}

\footnote{AMS Subject Classification: 57M12, 57K10, keywords: rigid cusps, commensurability, quotients of knot complements} 

\maketitle

\begin{abstract}
This paper completes a classification of the types of orientable and non-orientable cusps that can arise in the quotients of hyperbolic knot complements. In particular, $S^2(2,4,4)$ cannot be the cusp cross-section of any orbifold quotient of a hyperbolic knot complement. Furthermore, if a knot complement covers an orbifold with a $S^2(2,3,6)$ cusp, it also covers an orbifold with a $S^2(3,3,3)$ cusp. We end with a discussion that shows all cusp types arise in the quotients of link complements. 
\end{abstract}


\section{Introduction}

There are five orientable Euclidean 2-orbifolds:
 $$T^2, S^2(2,2,2,2), S^2(2,3,6), S^2(3,3,3) \mbox{ and } S^2(2,4,4).$$ 
 The figure 8 knot complement covers orbifolds with four of the five types of cusps (all but $S^2(2,4,4))$. In keeping with the standard terminology, we say that $S^2(2,3,6)$, $S^2(3,3,3)$ and $S^2(2,4,4)$ are the cross-sections of \emph{rigid cusps} because they have a unique geometric structure up to Euclidean similarity. We also say $T^2$ and $S^2(2,2,2,2)$ correspond to \emph{non-rigid cusps}, which up to similarity have a two dimensional (real) parameter space of possible cusp shapes.  
The aforementioned quotients of the figure 8 knot complement to orbifolds with $S^2(2,3,6)$ and $S^2(3,3,3)$ cusps can be easily constructed by analyzing the symmetries of the underlying space of the complement, two regular ideal tetrahedra (see \cite{NR92b} for example). Similarly, there are two knot complements that decompose into regular ideal dodecahedra \cite{AiRu} and each of these knot complements also admits a quotient to an orbifold with a $S^2(2,3,6)$ cusp and to an orbifold with a $S^2(3,3,3)$ cusp (see \cite[\S 9]{NeumReid}, and \cite{Hoffman} for more background).   Collectively, these three examples are the only  known examples of hyperbolic knot complements that cover orbifolds with rigid cusps. Again, for each of the three examples, we can find a quotient with a with  $T^2, S^2(2,2,2,2), S^2(2,3,6)$ or $ S^2(3,3,3) $ cusp.  Curiously missing from this list is a hyperbolic knot complement which covers an orbifold with a $S^2(2,4,4)$ cusp. The main theorem shows that such a cover cannot occur.

\begin{thm}\label{thm:no_s244cusps}
Let $\orbQ$ be an orbifold covered by a hyperbolic knot complement, then $\orbQ$ does not have a $S^2(2,4,4)$ cusp. 
\end{thm}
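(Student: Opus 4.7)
The plan is to argue by contradiction: suppose $N = S^3 \setminus K$ is a hyperbolic knot complement that covers an orbifold $\orbQ$ with $S^2(2,4,4)$ cusp. I would treat the regular cover case separately and reduce the irregular case to it via the commensurator.

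\emph{Regular case.} Suppose $\pie(N) \triangleleft \pie^{\mathrm{orb}}(\orbQ)$, so $\orbQ = N/G$ for a finite group $G$ of orientation-preserving isometries of $N$. The $S^2(2,4,4)$ cusp forces the cusp stabilizer in $G$ to contain an element $\rho$ of order $4$ that fixes points on the cusp torus $T$ and acts there as a $\pi/2$-rotation. Consequently $\rho_{\ast}$ on $H_1(T) \cong \Z\mu \oplus \Z\lambda$ is an order-$4$ element of $\mathrm{SL}(2,\Z)$ and has trace $0$. On the other hand $\rho$ induces an automorphism $\pm 1$ of $H_1(N) \cong \Z$; compatibility with the inclusion-induced map $\iota_{\ast}\co H_1(T) \to H_1(N)$, given by $\mu \mapsto 1$ and $\lambda \mapsto 0$, forces the matrix of $\rho_{\ast}$ in the $(\mu,\lambda)$-basis to be of the form $\bigl(\begin{smallmatrix}\pm 1 & 0 \\ c & \pm 1\end{smallmatrix}\bigr)$, which has trace $\pm 2$. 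This contradicts the trace-zero requirement and eliminates the regular case.

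\emph{Reducing the irregular case.} For an irregular cover I would first replace $\orbQ$ by the minimal orbifold $\orb$ in the commensurability class of $N$. Because $\Z^2 \rtimes \Z/4$ is maximal among orientation-preserving Euclidean crystallographic $2$-orbifold groups containing it, the cusp of any orbifold covered by $\orbQ$ remains $S^2(2,4,4)$; in particular $\orb$ has $S^2(2,4,4)$ cusp. The arithmetic subcase is handled by Reid's theorem that the only arithmetic hyperbolic knot is the figure-eight, whose minimal commensurator quotient has $S^2(2,3,6)$ cusp and not $S^2(2,4,4)$.

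\emph{Main obstacle.} The essential work lies in the non-arithmetic case where the order-$4$ element $\rho \in \pie^{\mathrm{orb}}(\orb)$ fixing the cusp does not normalize $\pie(N)$, i.e., where $N$ has hidden symmetries. Here the regular-case argument does not directly apply, and I would pass to the intermediate regular cover $\orbQ_0$ with orbifold fundamental group $N_{\pie^{\mathrm{orb}}(\orb)}(\pie(N))$. The cusp of $\orbQ_0$ is a cover of $S^2(2,4,4)$, hence $T^2$, $S^2(2,2,2,2)$, or $S^2(2,4,4)$; the third case yields the regular-case contradiction for $N \to \orbQ_0$. In the remaining subcases the $\Z/4$-rotation is visible only above $\orbQ_0$, and one must exploit the conjugation action of $\rho$ on $\pie(N)$ (producing several distinct conjugate knot subgroups inside $\pie^{\mathrm{orb}}(\orb)$) together with Reid--Walsh-type bounds on the number of knot complements in a commensurability class and the unique meridian constraint $H_1(N) \cong \Z$. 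Reconciling the $\Z/4$-structure on peripheral data with the rigid knot-complement structure of $N$ is the crux, and the main obstacle I anticipate.
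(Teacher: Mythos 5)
Your regular-case argument is correct: an orientation-preserving symmetry of $\Sth \setminus K$ stabilizing the cusp must preserve $\ker\bigl(H_1(T)\to H_1(\Sth\setminus K)\bigr)=\langle\lambda\rangle$ and act by $\pm1$ on $H_1(\Sth\setminus K)\cong\Z$, so its matrix is lower triangular with trace $\pm2$, ruling out an order-$4$ rotation. The reduction to the minimal orbifold and the arithmetic subcase (Reid) are also fine. But these are exactly the easy, essentially known parts: for a non-arithmetic knot, covering a rigid-cusped orbifold is equivalent to admitting hidden symmetries (Neumann--Reid), so the case you explicitly defer --- the order-$4$ peripheral element not normalizing $\pi_1(\Sth\setminus K)$ --- is the entire content of the theorem, and your proposal names it as an unresolved ``main obstacle'' rather than resolving it. That is a genuine gap.

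For comparison, the paper closes this gap as follows. Writing $\pi_1^{orb}(\orbQ)=P_4\cdot\Gamma_K$ with $P_4=\langle c,d\mid c^4,d^2,(cd)^4\rangle$, it builds an explicit presentation in which every conjugation relator $c\mu_jc^{-1}\gamma_{c,j}^{-1}$, $d\mu_jd^{-1}\gamma_{d,j}^{-1}$ has $\gamma_{\ast,j}$ parabolic; killing $d$, $c^2$ and all parabolics therefore kills $\Gamma_K$ and exhibits a surjection $\pi_1^{orb}(\orbQ)\twoheadrightarrow\Z/2\Z$ whose kernel contains $\Gamma_K$. This produces a double cover $\orbQ_2$ with an $S^2(2,2,2,2)$ cusp, to which the cover from $\Sth\setminus K$ is regular by Reid's lemma, and hence a torus-cusped intermediate quotient $\orbQ_T$ that is a knot exterior in an orbilens space whose singular locus is two circles. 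The cusp of $\orbQ_T$ is forced to be rotationally rhombic (a square lattice), and the order-$4$ rotation $r$ of the commensurator cusp conjugates a meridian $\mu$ to $\mu'=r\mu r^{-1}$ with $r^2\mu r^{-2}=\mu^{-1}$; both $\langle\langle\mu\rangle\rangle_{\Gamma_T}$ and $\langle\langle\mu'\rangle\rangle_{\Gamma_T}$ are knot groups normal in $\Gamma_T$ with the same cyclic quotient, so $\orbQ_T\setminus\Sigma(\orbQ_T)$ admits a nontrivial cosmetic surgery, contradicting the cosmetic-surgery rigidity of Boileau--Boyer--Cebanu--Walsh. None of this machinery (the peripheral-times-knot-group presentation, the passage to $\orbQ_T$, the singular-locus analysis, or the cosmetic-surgery contradiction) appears in your sketch, and your proposed substitutes (Reid--Walsh-type counting of knot complements in a commensurability class, the normalizer cover $\orbQ_0$) do not by themselves see the order-$4$ element once its rotation is no longer visible in $\pi_1^{orb}(\orbQ_0)$.
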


This theorem is also relevant to the larger question of which knot complements admit hidden symmetries (see \Cref{sect:background}). It is conjectured by Neumann and Reid that only the figure 8 knot complement and the dodecahedral knot complements exhibit this property (see  \cite[Conjecture 1.1]{BoiBoCWa2} for example). In that context, we can appeal to a similar argument to the proof of the main theorem and show that if a knot complement covers an orbifold with a $S^2(2,3,6)$, then it also covers an orbifold with a $S^2(3,3,3)$ cusp. Of course, we see this phenomenon in the examples discussed above. Thus, generically, if a hyperbolic knot complement covers an orbifold with a $S^2(2,3,6)$ cusp, we can build a diagram of orbifolds covers where an orbifold with a  $S^2(3,3,3)$ cusp is an intermediate cover (see in Figure \ref{fig:Figure8Quotients}). In the known examples, for the figure 8 knot complement $n_0=1$, $n_1=2$, $6n_2=6$. In the case of the dodecahedral knots, $n_0=1$, but in general $n_1$ and $n_2$ are determined by the relevant symmetry groups (see \cite{Hoffman}). More importantly, an orbifold with an $S^2(3,3,3)$ also arises, as evidenced below.

\begin{thm}\label{thm:s236case}
 Let $f \co M \rightarrow \orbQ$, where $M$ is a manifold covered by a hyperbolic knot complement.
If $\orbQ$ has a $S^2(2,3,6)$ cusp, then $(\pi_1^{orb}(\orbQ))^{ab} \cong \Z/2\Z$, $deg(f) =24n$, $n \geq 1$, and $M$ covers the double cover of $\orbQ$, which has
a $S^2(3,3,3)$ cusp.
\end{thm}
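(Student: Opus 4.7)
The plan is to control $A := \pi_1^{orb}(\orbQ)^{ab}$ using that $\orbQ$ is covered by the one-cusped manifold $N$ (a knot complement), and then derive the cover structure and degree from cusp arithmetic. Write $G = \pi_1^{orb}(\orbQ)$, $K = \pi_1(N)$, and let $G_\infty \leq G$ be the cusp subgroup, which is isomorphic to the wallpaper group $p6$ and has torsion abelianization $G_\infty^{ab} \cong \Z/6 \cong \Z/2 \oplus \Z/3$. First I would show $A$ is finite: any $\phi \co G \to \Q$ vanishes on $G_\infty$ because $G_\infty^{ab}$ is torsion, hence on the meridian $m \in K \cap G_\infty$; since $m$ generates $K^{ab} = H_1(N;\Z) \cong \Z$, we get $\phi|_K = 0$, and injectivity of the restriction $H^1(G;\Q) \hookrightarrow H^1(K;\Q)$ for finite-index subgroups forces $\phi = 0$.

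Next I would pin down $A$ via a multi-cusp obstruction: for any surjection $\phi \co G \twoheadrightarrow \Z/p$ with $\phi|_{G_\infty} = 0$, the associated $p$-fold cover of $\orbQ$ has $p$ cusps, and $\phi(m) = 0$ forces $K \leq \ker \phi$, so $N$ factors through the cover — impossible when $p \geq 2$ since $N$ has only one cusp. Hence $\phi|_{G_\infty}$ must be nontrivial, forcing $p \in \{2,3\}$. The uniqueness of the primary $\Z/p$-quotients of $\Z/6$, combined with the absence of nontrivial maps $\Z/6 \to \Z/4$ or $\Z/9$, then implies each prime-$p$ part of $A$ is cyclic and of exponent at most $p$, i.e. $A \in \{0, \Z/2, \Z/3, \Z/6\}$. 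To reduce this list to $A \cong \Z/2$, I would invoke Theorem~\ref{thm:no_s244cusps}: if $A$ had a $\Z/3$-quotient, the corresponding triple cover of $\orbQ$ has $S^2(2,2,2,2)$ cusp and (by analyzing its own abelianization by the same multi-cusp method) admits a $\Z/2$-quotient producing an orbifold with $S^2(2,4,4)$ cusp covered by a knot complement, contradicting Theorem~\ref{thm:no_s244cusps}; the case $A = \Z/6$ reduces to the $\Z/3$-case. Ruling out $A = 0$ I would handle by arguing directly that the order-$2$ cone axis of the cusp must unwrap in any manifold cover, producing a nontrivial $\Z/2$-class in $A$. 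I expect these elimination steps to be the main technical obstacle.

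With $A \cong \Z/2$ established, the associated double cover $\orbQ' \to \orbQ$ restricts on the cusp to the cover $S^2(3,3,3) \to S^2(2,3,6)$ corresponding to the unique index-$2$ subgroup $p3 \leq p6$, giving the claimed $S^2(3,3,3)$ cusp. To see that $M$ covers $\orbQ'$, I would apply the multi-cusp argument at the $M$-level: setting $G' := \pi_1^{orb}(\orbQ')$, if $\pi_1(M) \not\leq G'$ then the torsion-freeness of $\pi_1(M)$ places $\pi_1(M) \cap G_\infty$ inside the translation lattice $T \leq p3 = G_\infty \cap G'$, so the associated double cover $M_1 \to M$ (with $\pi_1(M_1) := \pi_1(M) \cap G'$) would have two cusps over $M$'s single cusp; but the meridian argument gives $K \leq G'$, hence $K \leq \pi_1(M_1)$, so $N$ would cover $M_1$, contradicting $N$'s single cusp. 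Hence $\pi_1(M) \leq G'$, i.e., $M \to \orbQ' \to \orbQ$ factors $f$. Finally, $\deg(f) = [G_\infty : \pi_1(M) \cap G_\infty] = 6 \cdot [T : \pi_1(M) \cap G_\infty]$, so the divisibility $24 \mid \deg(f)$ reduces to $4 \mid [T : \pi_1(M) \cap G_\infty]$. I would prove this by tracking how the meridian–longitude pair of $N$ sits inside the hexagonal lattice $T$, with one factor of $2$ accounted for by the cusp of $M$ being a torus cover of $S^2(3,3,3)$ and the remaining factor of $2$ coming from a symmetry constraint on admissible torsion-free sublattices tied to the $\Z/2$-rotation distinguishing $p6$ from $p3$; making this last factor of $2$ rigorous is the final delicate point of the degree calculation.
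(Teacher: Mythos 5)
Your reduction of $A=\pi_1^{orb}(\orbQ)^{ab}$ to the list $\{0,\Z/2,\Z/3,\Z/6\}$ via the one-cusp/meridian argument is sound, but the two elimination steps you flag as ``the main technical obstacle'' are where the proposal actually breaks. First, the attempt to rule out a $\Z/3$-quotient by invoking \Cref{thm:no_s244cusps} rests on a false implication: a surjection of the triple cover's fundamental group onto $\Z/2$ corresponds to a further double \emph{cover}, whose cusp is a degree-$\le 2$ cover of $S^2(2,2,2,2)$ and hence is $T^2$ or $S^2(2,2,2,2)$ --- it can never be $S^2(2,4,4)$. Producing a $S^2(2,4,4)$ cusp would require exhibiting a strictly \emph{larger} discrete group (a quotient orbifold), and nothing in the abelianization analysis supplies one; there is no mechanism for $4$-torsion to appear from the $p6\supset p3\supset p2$ hexagonal picture. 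Second, the case $A=0$ is dispatched with an unproved assertion about cone axes ``unwrapping.'' The paper handles both points differently: it writes $\pi_1^{orb}(\orbQ)=P_6\cdot\Gamma_K$, builds an explicit presentation with conjugation relations $a\mu_ja^{-1}\gamma_{a,j}^{-1}$, $b\mu_jb^{-1}\gamma_{b,j}^{-1}$ whose $\gamma$'s are parabolic, and observes that killing the normal closure of all parabolics together with $b$ kills $\Gamma_K$ (normally generated by parabolic meridians) and collapses $P_6$ to $\langle a\mid a^2\rangle\cong\Z/2$; this gives the surjection $A\twoheadrightarrow\Z/2$, and the upper bound $A\cong\Z/2$ is then imported from \cite[Lemma 4.1(1)]{Hoffman_hidden} rather than proved ab initio.

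The degree claim is also not established. Your identity $\deg(f)=6\cdot[T:\pi_1(M)\cap G_\infty]$ is fine, but neither proposed source of the needed factor $4\mid[T:\pi_1(M)\cap G_\infty]$ works as stated: the containment $\pi_1(M)\cap G_\infty\le T$ contributes the factor $6$ already counted, and rotation-invariance of a sublattice of the hexagonal lattice does not force even index (invariant sublattices have index $n_1^2+n_1n_2+n_2^2$, which can be $3$, $7$, etc.). The paper does not prove this divisibility internally either; it cites \cite[Lemma 5.5]{Hoffman_hidden} for $12\mid\deg(M\to\orbQ_3)$ and doubles it. By contrast, your argument that $M$ must cover the index-two subgroup $G'$ (via $\pi_1(M)=(\pi_1(M)\cap P_6)\cdot\Gamma_K$ with both factors landing in $G'$) is essentially the paper's reasoning and is correct. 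So the proposal is right in outline for the ``soft'' parts but has genuine gaps exactly at the three quantitative assertions of the theorem.
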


\begin{figure}
    \centerline{
\xymatrix{
\Sth \setminus K \ar[d]_{n_0} & &\\
M \ar[drr]^{n_1} \ar[ddrr]^{12n_3} \ar[dddrr]_{12n_3,24n_3}   &\\
& & \orbQ_T \ar[d]^{6n_2}\\
& & \orbQ_3 \ar[d]^{1,2}\\
& & \orbQ\\
}}
\caption{\label{fig:Figure8Quotients} The structure of quotients of a manifold covered by a knot complement that admits hidden symmetries in light of \Cref{thm:s236case} and \Cref{thm:no_s244cusps}. Here $\orbQ_3$ has an $S^2(3,3,3)$ cusp and $\orbQ_T$ has a torus cusp. We point out that $n_i \geq 1$ and the degree of the cover of $\orbQ_3$ to $\orbQ$ could be $1$ or $2$.  }
\end{figure}

\Cref{thm:s236case} is sharp in the sense that there is a $24$-fold cover by the figure 8 knot complement to the minimum volume orientable orbifold in its commensurability class $\Hth/PGL(2,\mathbb{O}_3)$.  However, for each of the dodecahedral knot complements the analogous cover is degree $120$.

The main results of this paper provide evidence for the Rigid Cusp Conjecture of Boileau, Boyer, Cebanu, and Walsh \cite[Conjecture 1.3]{BoiBoCWa2}.

\begin{conj}[Rigid Cusp Conjecture] If a hyperbolic knot complement $\Sth \setminus K$ covers an orbifold with a rigid cusp, $\Sth \setminus K$ covers an orbifold with a $S^2(2,3,6)$ cusp.
\end{conj}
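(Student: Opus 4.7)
The plan is to use \Cref{thm:no_s244cusps} and \Cref{thm:s236case} to reduce the Rigid Cusp Conjecture to a single implication, and then attempt to resolve that implication via a commensurator argument. Since the three rigid Euclidean $2$-orbifolds are $S^2(2,3,6)$, $S^2(3,3,3)$, and $S^2(2,4,4)$, and the last is excluded by \Cref{thm:no_s244cusps}, any hyperbolic knot complement $\Sth \setminus K$ covering an orbifold with a rigid cusp must cover one whose rigid cusp is either $S^2(2,3,6)$ or $S^2(3,3,3)$. The $S^2(2,3,6)$ case is the desired conclusion, so it suffices to prove the remaining implication: if $\Sth \setminus K$ covers an orbifold $\orbQ_3$ with an $S^2(3,3,3)$ cusp, then $\Sth \setminus K$ also covers an orbifold with an $S^2(2,3,6)$ cusp.

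The key structural observation is that $S^2(2,3,6)$ is the quotient of $S^2(3,3,3)$ by an orientation-preserving involution $\sigma$ corresponding to the index-$2$ inclusion $\Delta(3,3,3)\subset \Delta(2,3,6)$. Thus the task is to extend $\sigma$ to a global orientation-preserving isometric involution $\tilde\sigma$ of some orbifold in the commensurability class of $\Sth \setminus K$ which is still covered by $\Sth \setminus K$. The arithmetic case is immediate: by Reid's theorem the only arithmetic hyperbolic knot complement is the figure $8$ knot complement, and this is already known to cover $\Hth/PGL(2,\OO_3)$, which has an $S^2(2,3,6)$ cusp.

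For the non-arithmetic case I would appeal to Margulis: the commensurator $\mathrm{Comm}(\pi_1(\Sth\setminus K))$ is discrete, so the minimal orbifold $\orb_{\min} = \Hth/\mathrm{Comm}(\pi_1(\Sth\setminus K))$ in the commensurability class is finitely covered by $\Sth \setminus K$, and $\orbQ_3 \to \orb_{\min}$. The cusp of $\orb_{\min}$ is therefore an orientable Euclidean $2$-orbifold quotient of $S^2(3,3,3)$, hence either $S^2(3,3,3)$ or $S^2(2,3,6)$. If it is $S^2(2,3,6)$, then $\orb_{\min}$ is the required quotient and we are done.

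The main obstacle is the remaining case: a non-arithmetic $\Sth \setminus K$ whose commensurator quotient has an $S^2(3,3,3)$ cusp, i.e. for which $\sigma$ is not realized by any commensurator element. To rule this out I would adapt the abelianization/transfer argument used for \Cref{thm:s236case}: since $\pi_1(\Sth \setminus K)^{ab} \cong \Z$, strong constraints are imposed on the deck group of $\Sth \setminus K \to \orbQ_3$ and on $(\pi_1^{orb}(\orbQ_3))^{ab}$, and one would try to show that the presence of the order-$3$ rotation at the cusp, combined with the hexagonality of the cusp shape, forces an order-$2$ partner on a suitable intermediate manifold cover. I expect the genuine difficulty to lie precisely here: producing that partner appears to require new input, perhaps from the structure of the invariant trace field of the commensurability class or from a sharper local analysis of rigid cusp neighborhoods, that goes beyond the orbifold-combinatorial methods developed in this paper, which is presumably why the present work advances evidence for, rather than a proof of, the conjecture.
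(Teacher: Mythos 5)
You should first note that this statement is labeled a \emph{conjecture} in the paper (it is the Rigid Cusp Conjecture of Boileau, Boyer, Cebanu, and Walsh), and the paper offers no proof of it; the paper only claims that its results ``provide evidence for'' the conjecture and that \Cref{thm:s236case} together with \Cref{thm:no_s244cusps} reduce it to the open Question stated immediately afterward. Your reduction is exactly the one the paper makes: \Cref{thm:no_s244cusps} removes the $S^2(2,4,4)$ case, the $S^2(2,3,6)$ case is vacuous, and what remains is precisely the implication ``covers an orbifold with an $S^2(3,3,3)$ cusp $\Rightarrow$ covers an orbifold with an $S^2(2,3,6)$ cusp.'' Your commensurator observation is also sound as far as it goes: for a non-arithmetic $\Sth\setminus K$ the orientable wallpaper groups containing $\Delta(3,3,3)$ are only $p3$ and $p6$, so the cusp of $\Hth/\mathrm{Comm}^+(\pi_1(\Sth\setminus K))$ is either $S^2(3,3,3)$ or $S^2(2,3,6)$, and only the former case is problematic.

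The genuine gap is the one you yourself flag: the case where the minimal orientable orbifold in the commensurability class has an $S^2(3,3,3)$ cusp is not handled, and your sketch of ``adapting the abelianization/transfer argument'' does not go through as stated. The transfer argument in the proof of \Cref{thm:s236case} works in the \emph{downward} direction --- given $2$-torsion on the cusp, killing parabolics and $b$ exhibits a $\Z/2\Z$ quotient and hence a double cover with the torsion removed. To prove the conjecture you would need the \emph{upward} direction: manufacturing an order-$2$ cusp rotation (an extension of $\pi_1^{orb}(\orbQ_3)$) that does not already live in the commensurator, which is impossible if the commensurator is maximal; so the real content is showing that a knot complement cannot have $S^2(3,3,3)$ as the cusp of its commensurator quotient, and no abelianization computation on $\orbQ_3$ alone can detect that. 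Your proposal is therefore an accurate restatement of the paper's reduction plus an honest acknowledgment that the remaining implication is open, not a proof; since the statement is a conjecture, that is the correct state of affairs, but you should present it as such rather than as a proof attempt.
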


 \Cref{thm:no_s244cusps} reduces the conjecture to answering the following question, which would imply a partial converse of \Cref{thm:s236case}. 
 
\begin{question}
If $\Sth \setminus K$ covers an orbifold with a $S^2(3,3,3)$ cusp, must it also cover an orbifold with a $S^2(2,3,6)$ cusp?
\end{question}

%

We end this section with a theorem that effectively serves as a summary of the arguments of this paper so far.

\begin{thm}\label{thm:possible_non_or_cusps}
Let $\Sth \setminus K$ cover a orbifold $\orbQ$. Then the cusp of $\orbQ$ is either:
orientable and one of $T^2$, $S^2(2,2,2,2)$,  $S^2(2,3,6)$, or $S^2(3,3,3)$,
or non-orientable and one of 
$K^2$, $RP^2(2,2)$, $D^2(;2,3,6)$, $D^2(;3,3,3)$ or $D^2(3;3)$.
\end{thm}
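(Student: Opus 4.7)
The plan is to combine the orientable classification from \Cref{thm:no_s244cusps} with an orientation-double-cover reduction, and then to assemble the case-by-case obstructions developed in the body of the paper. For the orientable cusp case, the statement is immediate: the five orientable Euclidean $2$-orbifolds are listed in the introduction, and \Cref{thm:no_s244cusps} excludes $S^2(2,4,4)$, leaving precisely the four orientable types named in the conclusion.

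Suppose instead that $\orbQ$ has a non-orientable cusp $N$. I pass to the orientation double cover $\tilde{\orbQ} \to \orbQ$. Since $\Sth \setminus K$ is an orientable manifold, the cover $\Sth \setminus K \to \orbQ$ factors as $\Sth \setminus K \to \tilde{\orbQ} \to \orbQ$, so $\tilde{\orbQ}$ is itself covered by the knot complement, with orientable cusp $\tilde N$ equal to the orientation double cover of $N$. By the orientable case $\tilde N \in \{T^2, S^2(2,2,2,2), S^2(2,3,6), S^2(3,3,3)\}$, which already rules out $D^2(;2,4,4)$ and $D^2(4;2)$, both of whose orientation cover is $S^2(2,4,4)$. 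Enumerating the remaining non-orientable Euclidean $2$-orbifolds whose orientation cover lies in this four-element list yields the candidates: $K^2$, the silvered annulus, and the silvered M\"obius band (cover $T^2$); $RP^2(2,2)$, $D^2(;2,2,2,2)$, $D^2(2,2;)$, $D^2(2;2,2)$ (cover $S^2(2,2,2,2)$); $D^2(;2,3,6)$ (covers $S^2(2,3,6)$); and $D^2(;3,3,3)$, $D^2(3;3)$ (cover $S^2(3,3,3)$).

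The main obstacle is to rule out the five candidates not appearing in the conclusion: the silvered annulus, the silvered M\"obius band, $D^2(;2,2,2,2)$, $D^2(2,2;)$, and $D^2(2;2,2)$. These are precisely the non-orientable orbifolds whose orientation cover is $T^2$ or $S^2(2,2,2,2)$ and whose cusp group contains a mirror reflection. Since the knot complement has a canonical meridian generating $H_1(\Sth \setminus K) \cong \Z$, the plan is to analyze how the meridian must sit inside the peripheral group of $\orbQ$ (a wallpaper group containing such a reflection) and to derive a contradiction via the abelianization in each of these five cases, in the style of the argument used for \Cref{thm:no_s244cusps}. This case-by-case exclusion, which is the technical core of the paper, then combines with the orientable classification to yield the statement.
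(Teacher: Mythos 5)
Your overall architecture is sound and matches the paper's for the first two steps: \Cref{thm:no_s244cusps} handles the orientable case, and passing to the orientation double cover (through which the cover by the orientable manifold $\Sth\setminus K$ must factor) eliminates $D^2(4;2)$ and $D^2(;2,4,4)$ and reduces the problem to your ten-element candidate list, which you enumerate correctly. The realizability of the surviving nine types is not needed for the statement as written, so omitting it is fine.

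The gap is in the step you yourself identify as the main obstacle: the exclusion of $T_R$, $K_R$, $D^2(;2,2,2,2)$, $D^2(2;2,2)$ and $D^2(2,2;R)$ is never carried out, only announced as a ``plan,'' and the method you propose (tracking the meridian in the peripheral group and deriving an abelianization contradiction in the style of \Cref{thm:no_s244cusps}) is not the mechanism that makes these cases fail. The abelianization arguments in \Cref{thm:s236case} and \Cref{thm:no_s244cusps} exploit the specific rotational torsion of the rigid cusp groups; for the five wallpaper quotients above the peripheral torsion consists only of order-$2$ elements and reflections, and there is no evident homological obstruction — indeed $K^2$ and $RP^2(2,2)$, which survive, have orientation covers $T^2$ and $S^2(2,2,2,2)$ just like these five. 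What actually distinguishes the five excluded types is the presence of a mirror reflection in the cusp group. The paper's \Cref{lem:reflections_as_symms} shows that under the hypothesis that all peripheral elements have order $2$ or infinite order, $\Gamma_K$ is normal in $\pi_1^{orb}(\orbQ)$, so the peripheral reflection descends to a reflection symmetry of $\Sth\setminus K$ itself; this contradicts the observation of Boileau--Boyer--Cebanu--Walsh that a hyperbolic knot complement admits no reflection symmetry. Without this normality step (which requires the characteristic-subgroup argument inside the torus-cusped intermediate quotient $\orbQ_T$) or some substitute for it, your proposal does not close these five cases, and they are precisely the content of the theorem beyond what \Cref{thm:no_s244cusps} already gives.
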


We remark that the discussions above and in \Cref{sect:non_or} show that this is precisely the set of cusp types that can be covered by hyperbolic knot complements. However, for all rigid cusps there is a link complement which admits a quotient with at least one cusp of that type.

\subsection{Acknowledgements:} 
This work was partially supported by grant from the
Simons Foundation (\#524123 to Neil R. Hoffman). The author also wishes to thank Christian Millichap and William Worden for helpful conversations and Eric Chesebro, Michelle Chu, Jason Deblois, Pryadip Mondal, and Genevieve Walsh for agreeing to hear a video presentation of a talk based on this work after an AMS sectional meeting was cancelled in March 2020. Their feedback (and accountability as an audience) was both considerably helpful and greatly appreciated.  We thank the referee for numerous helpful suggestions including the prompt to add \Cref{sub:link_realization}.

\section{background}\label{sect:background}

We refer the reader to \cite[Chapter 13]{Th_notes} for more information on orbifolds and we will appeal to Thurston's notation conventions for 2-orbifolds established in that reference.  Namely for $2-$orbifolds, we say $F(a_1,...a_m;b_1,...b_n)$ is an orbifold with underlying space $F$, cone points of orders $a_i$ and corner reflectors marked by $b_j$. A sufficiently small neighborhood of a cone point is isometric to $D^2/(\Z/a_i \Z)$ and a sufficiently small neighborhood of a corner reflector is isometric to $D^2/(D_{2b_j})$ where $D_{2b_j}$ is a dihedral group of order $2b_j$. The lone departure from these conventions is explained in \Cref{fig:relevant_cusps} and \Cref{rem:notation}, which concerns simple closed curves with point only fixed a single reflection (i.e. there are no corner points along the curve).

We begin by defining terminology to frame the discussion.  A cusped hyperbolic $3$-orbifold $\orbQ$ is \emph{arithmetic} if and only if $\pi_1^{orb}(\orbQ)$ is commensurable with a subgroup of $\PSLTOD$ for some $d$. We refer the reader to \cite[Chapter 8]{MR03} for further background. 
There is a nice dichotomy of for arithmetic and non-arithmetic orbifolds. In the context of this paper, Reid \cite{ReidFig8} showed that the figure 8 knot complement is the only arithmetic knot complement. More generally, Margulis \cite{Margulis1991} showed that $\orbQ= \Hth/\Gamma$ is non-arithmetic, then $$Comm^+(\Gamma) = \{g \in Isom^+(\Hth) | [\Gamma \co \Gamma \cap g\Gamma g^{-1}]< \infty,  [g\Gamma g^{-1} \co \Gamma \cap g\Gamma g^{-1}]< \infty\}$$
 is discrete. In terms a knot complement $\Sth\setminus K$, either $K$ is the figure 8 knot or all (orientable) orbifolds commensurable with $\Sth \setminus K \cong \Hth/\Gamma_K$ will cover the commensurator quotient $\Hth/Comm^+(\Gamma_K)$.

A hyperbolic manifold $M$ admits \emph{hidden symmetries} if there exist a cover of $M$, $\tilde{M}$ such that $\tilde{M}$ admits a symmetry that is not the lift of a deck transformation of $M$. For a (non-arithmetic) hyperbolic knot complement $\Sth \setminus K$, 
Neumann and Reid showed that
 admitting hidden symmetries is equivalent to covering a rigid cusped orbifold (see \cite[\S 9]{NeumReid}).  

\subsection{The structure of the orbifold}

An orientable, (finite volume) hyperbolic 3-orbifold $\orbQ$ can be described as the quotient $\Hth/\Gamma$ (with $vol(\Hth/\Gamma) < \infty)$.  If $\orbQ$ is non-compact, then $\orbQ$ has cusps of the form $T^2 \times [0,\infty)$, $S^2(2,2,2,2) \times [0,\infty)$, $S^2(2,4,4) \times [0,\infty)$, $S^2(2,3,6) \times [0,\infty)$ or $S^2(3,3,3) \times [0,\infty)$. 

The \emph{singular set} of $\orbQ$ denoted by $\Sigma(\orb)$ is the set of points in the quotient $\orbQ \cong \Hth/\Gamma$ with non-trivial point stablizers in $\Gamma$. The \emph{underlying space} $|\orbQ|$ of $\orbQ$ is the 3-manifold determined by ignoring labels on $\Sigma(\orb)$.   It is well established that $\Sigma(\orbQ)$ is an embedded trivalent graph in $|\orbQ|$ (see for example \cite{boileau2005geometrization,cooper2000three}). 

As we consider knots in $S^3$ and their symmetries, the following definition will be useful later. An \emph{orbilens space} is a orientable orbifold cyclic quotient of $S^3$. For a thorough treatment of orbilens spaces in the context of knot complement quotients we refer the reader to \cite[\S 3]{BBoCWaGT}.

\subsection{Elements of $\pi_1^{orb}(S^2(2,3,6))$}\label{sub:translation_structure}
 We can be very explicit about our cusp groups and their action by isometries on the Euclidean plane. We start out by introducing relevant details of a $S^2(2,3,6)$ cusp group. 

Assume $\pi_1^{orb}(S^2(2,3,6))= \langle a, b | a^6,b^3,(ab)^2\rangle$. We can find a discrete faithful representation of this group into $Isom(E^2)$ using 
$$a \mapsto \begin{pmatrix} \cos(\pi/3) & \sin(\pi/3)\\ -\sin(\pi/3) & \cos(\pi/3) \end{pmatrix} \begin{pmatrix} x\\ y \end{pmatrix}$$ and 
$$b \mapsto \begin{pmatrix} \cos(2\pi/3) & \sin(2\pi/3)\\ -\sin(2\pi/3) & \cos(2\pi/3) \end{pmatrix} \begin{pmatrix} x\\ y \end{pmatrix} + \begin{pmatrix} \frac{1}{2}\\ \frac{1}{2\sqrt{3}} \end{pmatrix}$$

Notice the maximal abelian subgroup (aka the subgroup of translations) in this group is generated by:
$t_1 = ba^{-2} \mapsto  \begin{pmatrix} x+ \frac{1}{2}\\ y+\frac{\sqrt{3}}{2} \end{pmatrix}$, $t_2 = b^{-1}a^2 \mapsto  \begin{pmatrix} x+ 1 \\ y \end{pmatrix}$.

\subsection{Elements of $\pi_1^{orb}(S^2(2,4,4))$}\label{sub:translation_structure244}
The introduction listed the three hyperbolic knot complements known to cover orbifolds with $S^2(2,3,6)$ cusps. Before we exhibit the obstruction to a knot complement covering an orbifold with $S^2(2,4,4)$ cusp, we give background on this cusp type as well.


Analogously to the argument above, we will begin with a discrete faithful representation of the fundamental group of $S^2(2,4,4)$ into $Isom(E^2)$. Of course, a different embedding of this this group into $Stab(\infty) \subset \PSLTC$ is used in order to realize it as the peripheral subgroup subgroup of a hyperbolic 3-orbifold (see \cite{Hoffman_hidden} for example). 

Assume $\pi_1^{orb}(S^2(2,4,4))= \langle c, d | c^4,d^2,(cd)^4 \rangle$. We observe that if we map $$c \mapsto \begin{pmatrix} \cos(\pi/2) & \sin(\pi/2)\\ -\sin(\pi/2) & \cos(\pi/2) \end{pmatrix} \begin{pmatrix} x\\ y \end{pmatrix}$$
and 
$$d \mapsto \begin{pmatrix} \cos(\pi) & \sin(\pi)\\ -\sin(\pi) & \cos(\pi) \end{pmatrix} \begin{pmatrix} x\\ y \end{pmatrix} + \begin{pmatrix} 1\\ 0 \end{pmatrix}$$
we have our desired discrete faithful representation of this group into $Isom(E^2)$.

Notice the maximal abelian subgroup in this group is generated by:
$t_1  = c^2 d  \mapsto  \begin{pmatrix} x+1\\ y \end{pmatrix}$, $t_2 = cdc \mapsto  \begin{pmatrix} x  \\ y +1 \end{pmatrix}.$


%

\subsection{Improvements to degree bounds and a structure theorem}

 We will establish a useful improvement to the covering degree bounds of \cite[Lemma 5.5]{Hoffman_hidden}.
 We point out that this result is a structure theorem in the sense that it implies that knot complements which cover $S^2(2,3,6)$ cusped orbifolds also cover
 $S^2(3,3,3)$ cusped orbifolds. For comparison, the Rigid Cusp Conjecture of Boileau, Boyer, Cebanu, and Walsh \cite[Conjecture 1.3]{BoiBoCWa2} postulates that every hyperbolic knot complement admitting hidden symmetries covers an orbifold with a $S^2(2,3,6)$ cusp.

\begin{proof}[Proof of \Cref{thm:s236case}]
Let $\orbQ$ be a quotient of a hyperbolic knot complement $\Sth\setminus K$. Assume that $\orbQ$ has a $S^2(2,3,6)$ cusp. 
Further assume that $\Sth \setminus K$ covers $M$ and denote by $\Gamma_K = \pi_1(\Sth \setminus K) \subset \pi_1^{orb}(\orbQ) \subset \PSLTC$. We assume that $\Gamma_K$ and $\pi_1^{orb}(\orbQ)$ are identified with a discrete faithful representation of $\pi_1(\Sth\setminus K)$.

Since $\orbQ$ and $\Sth \setminus K$  both have one cusp,  $ \pi_1^{orb}(\orbQ)$ decomposes into cosets index by peripheral elements, and so $ \pi_1^{orb}(\orbQ) = P_6 \cdot \Gamma_K$ where $P_6$ is the peripheral subgroup of $\orbQ$.

We can further assume that $\Gamma_K = \langle \mu_1, ... \mu_n | r_1, ... r_m \rangle$, where $\mu_i$ are meridians of $K$ and each $r_j$ is a relation of $\Gamma_K$ (e.g. coming from a Wirtinger presentation.) Finally, assume $P_6 = \langle a, b | a^6,b^3,(ab)^2 \rangle$ and that $\mu_1 \in P_6$.   
  
Thus, we appeal to the structures of $P_6$ and $\Gamma_K$ to define a left coset action on $\Gamma_K$ as a subgroup of $ \pi_1^{orb}(\orbQ)$. From this coset action, we can deduce properties of presentation for $ \pi_1^{orb}(\orbQ)$. Specifically, we have

$$  \pi_1^{orb}(\orbQ) = \langle a, b, \mu_1, ... \mu_n | a^6,b^3,(ab)^2, r_1, ... r_m,  a\mu_j a^{-1}\gamma_{a,j}^{-1}, b\mu_j b^{-1}\gamma_{b,j}^{-1} \rangle$$
such that each $ \gamma_{a,j}^{-1}$, $\gamma_{b,j}^{-1}$ is a parabolic in $\pi_1^{orb}(\orbQ)$. Here we point out that since $ \pi_1^{orb}(\orbQ) = P_6 \cdot \Gamma_K$, these relations together with $a^6,b^3,(ab)^2 $ and $r_1,..., r_m$ form a complete set of relations for $ \pi_1^{orb}(\orbQ)$.

 Notice since $\Gamma_K$ has one orbit of parabolic fixed points (which is the same set of parabolic fixed points as $ \pi_1^{orb}(\orbQ)$),  $\gamma_{a,j}$ and $\gamma_{b,j}$ can be expressed as $w t_1^r t_2^s w^{-1}$, where $w \in \Gamma_K$ and $t_1 = ba^{-2}$, $t_2 = b^{-1}a^2$ (see \Cref{sub:translation_structure} for more details). 
Thus, in the quotient obtained by modding out by the normal closure of all parabolic elements of $P_6 \cdot \Gamma_K$ and $b$, $\Gamma_K$ becomes trivial, and $P_6 \twoheadrightarrow \Z/2\Z$. Hence, $\pi_1^{orb}(\orbQ)^{ab} \twoheadrightarrow \Z/2\Z.$  In light of this observation and \cite[Lemma 4.1(1)]{Hoffman_hidden}, we find that $\pi_1^{orb}(\orbQ)^{ab}\cong \Z/2\Z$  and $\orbQ$ has a 2-fold cover by an orbifold $\orbQ_3$ with a $S^2(3,3,3)$ cusp. $\orbQ_3$ is covered by $M$ since $\pi_1(\orbQ_3)$ and $\pi_1(\orbQ)$ have the same set of parabolic elements. By \cite[Lemma 5.5]{Hoffman_hidden}, the degree of the cover from $M$ to $\orbQ_3$ is $12n$ $(n\geq1)$. Thus, 
the covering degree of $M \to \orbQ$ is $24n$ ($n\geq 1$).
\end{proof}

 We now state a stronger version of \cite[Theorem 1.2]{Hoffman_hidden}, which follows from combining part (1) of that theorem with \Cref{thm:s236case} of this paper.
 
 \begin{cor}\label{cor:except}
Let $M$ be a manifold covered by a small non-arithmetic hyperbolic knot complement $\Sth \setminus K$ that admits two exceptional surgeries.
Then $M$ does not cover a rigid cusped orbifold. 
\end{cor}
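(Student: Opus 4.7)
The plan is a proof by contradiction that mirrors, and then slightly strengthens, the proof of \cite[Theorem 1.2]{Hoffman_hidden}. Suppose $M$ covers some rigid cusped orbifold $\orbQ$. Since $\Sth \setminus K$ covers $M$, any orbifold covered by $M$ is also covered by $\Sth \setminus K$, and therefore the cusp cross-section of $\orbQ$ must be one of $S^2(2,3,6)$, $S^2(3,3,3)$, or $S^2(2,4,4)$.

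The $S^2(2,4,4)$ case is eliminated outright by \Cref{thm:no_s244cusps}. For the $S^2(3,3,3)$ case, \cite[Lemma 5.5]{Hoffman_hidden} provides the lower bound $\deg(M \to \orbQ) = 12n$ with $n \geq 1$, while in the $S^2(2,3,6)$ case \Cref{thm:s236case} gives the improved lower bound $\deg(M \to \orbQ) = 24n$ with $n \geq 1$. This $24$ (as opposed to the weaker $12$ used previously) is the precise strengthening that makes the corollary possible.

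I would then re-run the argument of \cite[Theorem 1.2]{Hoffman_hidden}, which extracts an upper bound on the degree of a cover from $M$ to any rigid cusped quotient by combining the hypothesis that $\Sth \setminus K$ is small and non-arithmetic with the constraints of Boileau--Boyer--Cebanu--Walsh on the commensurator quotient of a knot complement admitting two exceptional surgeries. With the refreshed lower bound of $24$ in the $S^2(2,3,6)$ case (and $12$ in the $S^2(3,3,3)$ case), this upper bound is strictly smaller than what is required in both remaining cases, yielding the desired contradiction.

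The main obstacle is essentially bookkeeping: one must re-check the proof of \cite[Theorem 1.2]{Hoffman_hidden} and verify that, after substituting the new bound of $24$ for the old bound in the $S^2(2,3,6)$ case, the upper bound it produces from the two-exceptional-surgery hypothesis is indeed strictly below both $12$ and $24$ in the respective cases. No additional geometric input beyond \Cref{thm:no_s244cusps} and \Cref{thm:s236case} should be needed, which is why the result can be stated as a corollary rather than as a new theorem.
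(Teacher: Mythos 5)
Your proposal matches the paper's approach: the paper offers no more detail than that the corollary ``follows from a careful reading of the proof of \cite[Theorem 1.2]{Hoffman_hidden} with the fact that the relevant cover to a $S^2(2,3,6)$ cusped orbifold is at least $24$,'' which is exactly the strengthening you identify, supplemented by \Cref{thm:no_s244cusps} for the $S^2(2,4,4)$ case. The bookkeeping step you flag (re-checking that the upper bound from the two-exceptional-surgery hypothesis falls below the new lower bounds) is precisely what the paper delegates to the reader.
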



\subsection{Rotationally rhombic peripheral groups}

Given a rigid cusped orbifold group, we can look at subgroups of the translation group. We say a translation group is \emph{rotationally rhombic} if it can be generated by two translations of the same length that are equivalent under a rotation of order $3$, $4$ or $6$. 
We stress that this rotation is only defined in terms of its action on the translation subgroup and it does not need to extend to a symmetry of to the associated 3-orbifold. 
Of course, each rigid cusped orbifold group has a maximal abelian subgroup that is rotationally rhombic and that abelian subgroup in turn has a subgroup  of index $(n_1^2+3n_2^2)$ or $(n_1^2+n_2^2)$ which is determined by the square of the norm of algebraic integer in $\Z[\frac{1+\sqrt{-3}}{2}]$ or $\Z[i]$. These abelian groups are rotationally rhombic and index $6(n_1^2+3n_2^2)$, $4(n_1^2+n_2^2)$, or $3(n_1^2+3n_2^2)$ in the rigid cusped orbifold group.  More generally, we say a cusped hyperbolic orbifold $\orbQ$ has a \emph{rotationally rhombic cusp} if the peripheral subgroup associated to this cusp is rotationally rhombic. 


Although we might expect to see an orbifold with a rotationally rhombic cusp in the commensurability class of a knot complement which covers a rigid cusped orbifold, in fact this is impossible in the $S^2(2,3,6)$ and $S^2(3,3,3)$ cases by the following lemma. The extra hypothesis in the $S^2(2,4,4)$ case does not prevent us from later utilizing this result in the proof of \Cref{thm:no_s244cusps}.
Thus, in the end, this case is also irrelevant. 
\begin{lem}\label{prop:no_rhombus} 
Let $\Sth \setminus K$ be a knot complment that covers a rigid cusped orbifold $\orbQ$.
\begin{enumerate}
\item If $\orbQ$ has $4$-torsion on its cusp, then any torus-cusped orbifold $\orbQ_T$ covered by $\Sth \setminus K$ such that $\Sigma(\orbQ_T)$ is two embedded circles  does not have a rotationally rhombic cusp.  
\item If $\orbQ$ has $3$-torsion on its cusp, then any torus-cusped orbifold $\orbQ_T$ covered by $\Sth \setminus K$ does not have a rotationally rhombic cusp.  
\end{enumerate}
\end{lem}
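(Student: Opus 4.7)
The plan is to argue by contradiction. Assume $\orbQ_T$ has a rotationally rhombic peripheral lattice $\Lambda_T$ generated by translations equivalent under a rotation $R$ of order $n \in \{3,4,6\}$, and show this is incompatible with $\Sth\setminus K$ also covering the rigid orbifold $\orbQ$. First I would place $\Gamma_K$, $\pi_1^{orb}(\orbQ)$, and $\pi_1^{orb}(\orbQ_T)$ inside $\PSLTC$ with a common parabolic fixed point at $\infty$, so that the three peripheral translation lattices $\Lambda_K$, $T_{\text{rigid}}$ (the translation subgroup of the rigid peripheral group $P$, so $P_4$ or $P_6$ as in \Cref{sub:translation_structure244} and \Cref{sub:translation_structure}), and $\Lambda_T$ sit inside a common Euclidean plane, with $\Lambda_K \subset T_{\text{rigid}} \cap \Lambda_T$ of finite index in both.

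A quick CM-field argument rules out half the cases. A hexagonal lattice spans $\mathbb{Q}(\omega)$ over $\mathbb{Q}$ while a square lattice spans $\mathbb{Q}(i)$, and inside $\C$ we have $\mathbb{Q}(\omega) \cap \mathbb{Q}(i) = \mathbb{Q}$, so no rank-$2$ lattice can sit simultaneously in both. Hence in case~(2), with $T_{\text{rigid}}$ hexagonal, $\Lambda_T$ cannot be rotationally rhombic of order $4$; in case~(1), with $T_{\text{rigid}}$ square, $\Lambda_T$ cannot be rotationally rhombic of order $3$ or $6$. This reduces the lemma to showing $\Lambda_T$ cannot be a hexagonal lattice commensurable with $T_{\text{rigid}}$ (case~(2)), and cannot be a square lattice commensurable with $T_{\text{rigid}}$ (case~(1)).

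The heart of the argument, and the step I expect to be the main obstacle, is to derive a contradiction from these remaining commensurable configurations. I would mimic the coset-action setup used in the proof of \Cref{thm:s236case}: writing $\pi_1^{orb}(\orbQ) = P \cdot \Gamma_K$, every element decomposes as a word in the torsion generators of $P$ times a knot-group element, and the meridians $\mu_j$ normally generate $\Gamma_K$. A rotation $R$ preserving $\Lambda_T$ must, via commensurability and the action of the rotational generator of $P$ on $T_{\text{rigid}}$, produce additional conjugation relations of the shape $R\mu_j R^{-1}\gamma_{R,j}^{-1}$ analogous to the $a\mu_ja^{-1}\gamma_{a,j}^{-1}$ and $b\mu_jb^{-1}\gamma_{b,j}^{-1}$ of that proof. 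In case~(2), running the abelianization-modulo-parabolics computation with these extra relations forces $(\pi_1^{orb}(\orbQ))^{ab}$, or that of a suitable finite-index subgroup covered by $\Sth \setminus K$, to be strictly larger than the $\Z/2\Z$ permitted by \cite[Lemma 4.1(1)]{Hoffman_hidden}. In case~(1), the same bookkeeping should show that the combined action of the order-$4$ rotation in $P_4$ and the auxiliary rotation $R$ on $\Lambda_T$ forces a third singular arc to enter the cusp cross-section of $\orbQ_T$, contradicting the hypothesis that $\Sigma(\orbQ_T)$ is exactly two embedded circles. The key technical difficulty is that $R$ need not belong to $\pi_1^{orb}(\orbQ_T)$; the cleanest way around this is to lift $R$ to a deck transformation of an intermediate common cover of $\orbQ$ and $\orbQ_T$ sitting below $\Sth \setminus K$, where $R$ acts genuinely as a group element and can be composed with the torsion generators of $P$.
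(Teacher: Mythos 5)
Your opening reduction via cusp fields is correct but only disposes of the cases that were never really in contention: of course a lattice commensurable with the square lattice of a $S^2(2,4,4)$ cusp cannot be hexagonal, and vice versa. The substantive content of the lemma --- ruling out a rotationally rhombic $\Lambda_T$ of order $4$ in case (1) and of order $3$ or $6$ in case (2) --- is precisely the part you defer to a hoped-for ``bookkeeping'' computation, and that computation is not carried out. As written there is a genuine gap: it is not clear why extra conjugation relations coming from $R$ would push $(\pi_1^{orb}(\orbQ))^{ab}$ past $\Z/2\Z$ (the abelianization $\Z/2\Z$ is perfectly consistent with everything in the $S^2(2,3,6)$ setting, by \Cref{thm:s236case} itself), and the claim that the order-$4$ case ``forces a third singular arc'' is asserted without any mechanism. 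Your worry that $R$ may not lie in $\pi_1^{orb}(\orbQ_T)$ also points at the wrong fix: one does not need $R$ in $\Gamma_T$, only that the peripheral lattice of $\Gamma_T$ is $R$-invariant, which is exactly what rotational rhombicity gives.

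The paper's argument runs along a different axis and is worth comparing against. Working inside the discrete group $Comm^+(\Gamma_K)$ with the cusp at $\infty$, a meridian $\mu$ of $\Gamma_K$ fixing $\infty$ lies in the rotationally rhombic lattice of $\Gamma_T=\pi_1^{orb}(\orbQ_T)$, so its rotates $r\mu r^{-1}$ and $r^2\mu r^{-2}$ are again peripheral elements of $\Gamma_T$; each is a meridian of a conjugate knot group $r^i\Gamma_K r^{-i}$, each of which is normal in $\Gamma_T$ with $\Gamma_T/r^i\Gamma_K r^{-i}\cong\Z/n\Z$ and equal to the normal closure $\langle\langle r^i\mu r^{-i}\rangle\rangle_{\Gamma_T}$. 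Each such peripheral normal generator therefore yields a cyclic, hence orbi-lens space, filling of $\orbQ_T$. In the order-$4$ case the hypothesis that $\Sigma(\orbQ_T)$ is two embedded circles converts the two distinct fillings along $\mu$ and $r\mu r^{-1}$ into a non-trivial cosmetic surgery on a knot complement in $S^1\times S^1\times I$, contradicting \cite[Lemma 5.3]{BBoCWaGT}; in the order-$3$ case the three meridians satisfy a linear relation in the lattice, so their images in $H_1(\orbQ_T)=\Gamma_T^{ab}$ cannot all have the same cyclic quotient $\Z/n\Z$, a contradiction. Note in particular that the two-circles hypothesis is an input to the cosmetic-surgery step, not something you should expect to contradict by a lattice computation. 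To repair your proposal you would need to supply, at minimum, the normality of the conjugate knot groups in $\Gamma_T$ and some surgery- or homology-theoretic obstruction playing the role of the cosmetic surgery lemma; the presentation-and-abelianization machinery from the proof of \Cref{thm:s236case} does not by itself produce the contradiction.
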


A key component of this result follows from  \cite[Proposition 5.8]{BBoCWaGT}. However, a number of the statements in that paper sometimes exclude the case that $\Sth\setminus K$ covers a rigid cusped orbifold, 
which seems to be out of convenience, not necessity. We include a more or less self-contained proof below because it still remains relatively brief. We also use the notation that $\langle \langle a_1, ..., a_k \rangle \rangle_{G}$ is the normal closure of the set $\{ a_j \}$ in $G$. 

\begin{proof}
The claims here can be observed directly for the figure 8 knot complement.

For all other knot complements, we appeal to \cite{ReidFig8} so that we can assume that $\Sth \setminus K$ is non-arithmetic. By Margulis \cite{Margulis1991}, $Comm^+(\pi_1(\Sth\setminus K))$ is discrete and for simplicity we assume $\orbQ \cong \Hth/Comm^+(\pi_1(\Sth\setminus K))$ (see also \cite[Proposition 9.1]{NeumReid}). 
As $Comm^+(\pi_1(\Sth\setminus K))$ is maximal,  if $\orbQ_T$ is covered by a knot complement, the cover $f\co \Sth\setminus K \rightarrow \orbQ$ factors through $f_2\co \orbQ_T \rightarrow \orbQ$.
Then, if we fix a subgroup $\Gamma_K $ such that $\Gamma_K \cong \pi_1(\Sth\setminus K)$, $\Gamma_K \subset \pi_1^{orb}(\orbQ_T) \subset Comm^+(\Gamma_K)$, and $Comm^+(\Gamma_K)$ is identified with its image under a discrete faithful representation into $\PSLTC$. After conjugation, we may assume that $\Hth/ Comm^+(\Gamma_K)$ has a cusp at $\infty$.  Let $\mu$ be a meridian of $\Gamma_K$ fixing $\infty$ and let $r$ be a rotation of order $3$ or $4$ on the cusp. 
As a group element, $\mu$, $r \mu r^{-1}$, $r^2 \mu r^{-2}$ are in $\Gamma_T=\pi_1^{orb}(\orbQ_T)$.  

{\bf Case 1:} if $r$ is order $4$, then $r^2 \mu r^{-2} = \mu^{-1}$. However, in this case, $\mu$ and $\mu'=r\mu r^{-1}$ correspond to meridians of knot groups 
$\Gamma_K$ and $r \Gamma_K r^{-1}$. Both of these knot groups are normal subgroups of $\Gamma_T$ by \cite[Lemma 4]{ReidFig8}
 (see also the proof of \cite[Lemma A.3]{baker2015poincare}) and both quotients are cyclic. In fact, we can say 
 $\Gamma_T/\Gamma_K \cong \Gamma_T/(r\Gamma_K r^{-1}) \cong \Z/n\Z$ for some fixed $n$.  Furthermore, 
 as $\Gamma_K$ and $r\Gamma_K r^{-1}$ are their own $\Gamma_T$-normal closures, 
 $\langle \langle \mu \rangle \rangle_{\Gamma_T}= \Gamma_K$ and $\langle \langle \mu' \rangle \rangle_{\Gamma_T}= \Gamma_{K'}$. 
 Since $\mu$ and $\mu'$ correspond to peripheral elements of  $\Gamma_T$, we have that 
 $\Gamma_T/\langle \langle \mu \rangle \rangle_{\Gamma_T} \cong \Gamma_T/\langle \langle \mu' \rangle \rangle_{\Gamma_T} \cong \Z/n\Z$. 
 Thus, by the Geometrization of cyclic orbifolds \cite{BP}, both of these fillings correspond to orbilens space fillings of $\orbQ_T$. 
 Furthermore, by assumption $\Sigma(\orbQ_T)$ consists of two embedded circles, and so $\orbQ_T \setminus \Sigma(\orbQ_T)$ 
 is a knot complement in $S^1 \times S^1 \times I$ that admits a non-trivial cosmetic surgery, which contradicts \cite[Lemma 5.3]{BBoCWaGT}. 

{\bf Case 2:} If $r$ is order $3$ then $\mu$, $\mu'=r \mu r^{-1}$, $\mu''=r^2 \mu r^{-2}$ are $3$ distinct elements of 
$\Gamma_T$. In fact, each one is a normal generator for one of the three isomorphic knot groups in $\Gamma_T$, 
$\Gamma_K$, $\Gamma_{K'}=r\Gamma_K r^{-1}$, and $\Gamma_{K''}=r^2\Gamma_K r^{-2}$. Aping the argument 
from above, we now have $\Gamma_T/\Gamma_K \cong \Gamma_T/\Gamma_{K'} \cong \Gamma_T/\Gamma_{K''} \cong \Z/n\Z$ 
for some fixed $n$. However when viewed as translations in the rigid cusp group (the translations for $S^2(2,3,6)$ and $S^2(3,3,3)$ cusp groups are described in  \Cref{sub:translation_structure}), one of $\mu$, $r \mu r^{-1}$, $r^2 \mu r^{-2}$ must be either be a homological sum of the other two. Depending on orientation, two of these having the homological quotient (here $\Z/n\Z$) would mean the third is either homologically trivial in $H_1(\orbQ_T) (=\Gamma_T^{ab})$ or has a quotient of $\Z/2n\Z$. 
Again as above $\langle \langle \mu \rangle \rangle_{\Gamma_T}= \Gamma_K$, 
$\langle \langle \mu' \rangle \rangle_{\Gamma_T}= \Gamma_{K'}$, and  $\langle \langle \mu'' \rangle \rangle_{\Gamma_T}= \Gamma_{K''}$, 
and so one of the quotients of $\Gamma_T$ should have an abelianization not equal to the other two, contradicting that all are $\Z/n\Z$. 
\end{proof} 


\begin{rem}
There are  manifolds and orbifolds with rotationally rhombic cusps that both cover a rigid cusped orbifold with $3$-torsion on the cusp and admit finite cyclic fillings. The figure 8 sister manifold `m003' is an example of such a manifold. Therefore, the property that $\langle \langle \mu \rangle \rangle_{\Gamma_T}= \Gamma_K$ is essential to the previous argument. For the figure 8 sister manifold the relevant quotients by the normal closures of peripheral elements are $\Z/5\Z$, $\Z/5\Z$ and $\Z/10\Z$, i.e. the figure 8 sister manifold has three lens space fillings one for each of those abelian quotients.
\end{rem}

For the discussion below, we will consider an orbifold $\orbQ$ that is covered by a knot complement and has an $S^2(2,4,4)$ cusp. In this case $|\orbQ|$ is simply connected and in fact an open ball by \cite[Proposition 2.3]{Hoffman_hidden}, equivalently \cite[Corollary 4.11]{BBoCWaGT}.  We will be interested in the cover $f: \Sth \setminus K \to \orbQ$ and especially properties of the cover intrinsic to the cusp. In that case, we can denote by $\bar{\orbQ} \cong \orbQ \setminus S^2(2,4,4) \times [1,\infty)$ and consider the restriction of $f: \Sth \setminus n(K) \to \bar{\orbQ}$. This is a cover of the knot exterior to the (closure of the interior) of the orbifold. 

In this case, $\bar{\orbQ}$ has underlying space a closed ball and $\Sigma({\orbQ})$ is a properly embedded trivalent graph. The vertices of this graph are fixed by non-cyclic finite subgroups of $SO(3,\R)$. We can label a vertex of this graph by the (maximal) finite subgroup of $SO(3,\R)$ fixing that point. Each edge of this graph is fixed by an elliptic element of $\Gamma$ and so we can label an edge using a finite cyclic group or more simply the order of  the (maximal) finite cyclic group that fixes that edge. For example, $\partial(\bar{\orbQ})$ is incident to an edge labeled $2$ and two edges labeled $4$.

\begin{proof}[Proof of \Cref{thm:no_s244cusps}]
Assume that $\orbQ$ has a $S^2(2,4,4)$ cusp and is covered by a knot complement $\Sth \setminus K$, then similar to the arguments used to prove \Cref{thm:s236case} we can say $\pi_1^{orb}(\orbQ) = P_4 \cdot \Gamma_K$, where $P_4 \cong \pi_1^{orb}(S^2(2,4,4)= \langle c, d | c^4,d^2,(cd)^4 \rangle$ and $\Gamma_K \cong \pi_1(\Sth \setminus K)$. 

Again just as above, we can assume that $\Gamma_K = \langle \mu_1, ... \mu_n | r_1, ... r_m \rangle$ and then build the following presentation 
$$\pi_1^{orb}(\orbQ) = \langle c, d, \mu_1, ...\mu_n |  r_1, ... r_m, c^4,d^2,(cd)^4, c\mu_j c^{-1}\gamma_{c,j}^{-1}, d\mu_j d^{-1}\gamma_{d,j}^{-1} \rangle $$ where (similar to the $S^2(2,3,6)$ case each $ \gamma_{c,j}^{-1}$, $\gamma_{d,j}^{-1}$ is a parabolic in $\pi_1^{orb}(\orbQ)$. 
The previous observation about the structure of these parabolics can be lightly adapted from that argument. Here, we still have that since $\Gamma_K$ has one orbit of parabolic fixed points (which is the same set of parabolic fixed points as $ \pi_1^{orb}(\orbQ)$),  and so $\gamma_{c,j}$ and $\gamma_{d,j}$ can be expressed as $w t_1^r t_2^s w^{-1}$, where $w \in \Gamma_K$ and $t_1 = c^2 d^{-1}$, $t_2 = cd^{-1}c$.

There is therefore a map $h: \pi_1^{orb}(\orbQ) \twoheadrightarrow \Z/2\Z$ given by the introducing the relations that $d,c^2$ are trivial. Under $h$, $\Gamma_K$ is trivial as are all elements corresponding to peripheral 2 torsion as well as all parabolic elements. Therefore any knot complement which covers $\orbQ$ also covers this two-fold cover $\orbQ_2$. However, $\orbQ_2$ has no 4-torsion on the cusp. In fact, it has a $S^2(2,2,2,2)$ cusp. Thus, the cover $\Sth \setminus K$ is regular by  \cite[Lemma 4]{ReidFig8} (see also \cite[Lemma A.3]{baker2015poincare}). 

This also shows we have the set of covering maps exhibited in \Cref{fig:s244quotients}: $f_1: \Sth \setminus K \to \orbQ_T$, $f_2: \orbQ_T \to \orbQ_2$, and $f_3:  \orbQ_2 \to \orbQ$. 

\begin{figure}

    \centerline{
\xymatrix{
\Sth \setminus K \ar[dr]^{n}   &\\
 & \orbQ_T \ar[d]_2\\
 & \orbQ_2 \ar[d]_2\\
 & \orbQ\\
}}
\caption{\label{fig:s244quotients} The restrictions on the structure of quotients of a manifold covered by a knot complement that covers an orbifold with a $S^2(2,4,4)$ cusp. Later, we show even this restricted case cannot occur. }
\end{figure}

\begin{claim}
$\orbQ_T$ is a knot in an orbilens space such that $\Sigma(\orbQ_T)$ is two circles.
\end{claim}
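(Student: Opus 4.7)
The plan is to extend the regular cover $\Sth \setminus K \to \orbQ_T$ to a cover of closed orbifolds $S^3 \to \hat{\orbQ_T}$ by Dehn filling along the meridian $\mu$, and then to identify $\hat{\orbQ_T}$ as a spherical orbilens space whose two Heegaard cores account for $\Sigma(\orbQ_T)$. First I would identify the deck group: writing $\Gamma_T = \pi_1^{orb}(\orbQ_T)$ and using $\Gamma = P_4 \cdot \Gamma_K$ from the main proof together with $\Gamma_T \subset \Gamma$, every element of $\Gamma_T$ factors as $pk$ with $p \in P_4 \cap \Gamma_T = P_{\orbQ_T}$ and $k \in \Gamma_K$, giving $\Gamma_T = P_{\orbQ_T} \cdot \Gamma_K$ and hence $G := \Gamma_T/\Gamma_K \cong P_{\orbQ_T}/P_K$, a finite abelian group (the quotient of the translation lattice $P_{\orbQ_T} = T \cong \Z^2$ at the torus cusp by the peripheral lattice $P_K = \Z\mu \oplus \Z\lambda$). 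Since $P_{\orbQ_T}$ is abelian, $G$ fixes $\mu$, so the Dehn filling producing $S^3 = (\Sth \setminus K) \cup_\mu (D^2 \times S^1)$ is $G$-equivariant and extends to a $G$-action on $S^3$ (each element acting on the glued solid torus as a rotation of the meridian disk combined with a translation along the core), whose quotient $\hat{\orbQ_T} = S^3/G$ is the corresponding Dehn filling of $\orbQ_T$.

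Next, I would invoke the Orbifold Geometrization Theorem \cite{BP} to deduce that $\hat{\orbQ_T}$ is spherical, so up to conjugacy the extended $G$-action is by isometries and $G \subset SO(4)$. Being abelian, $G$ is contained (after conjugation) in a maximal torus $SO(2) \times SO(2) \subset SO(4)$, whose fixed locus in $S^3$ is the Hopf link formed by the two cores $A_1, A_2$ of the standard Clifford-torus genus-one Heegaard splitting. Consequently the underlying space of $\hat{\orbQ_T}$ is a lens space and $\Sigma(\hat{\orbQ_T}) \subseteq A_1/G \cup A_2/G$; that is, $\hat{\orbQ_T}$ is an orbilens space with at most two singular Heegaard cores.

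Finally, to conclude that $\orbQ_T = \hat{\orbQ_T} \setminus (K/G)$ is a knot complement in this orbilens space with $\Sigma(\orbQ_T) = A_1/G \cup A_2/G$ equal to two embedded circles, I would argue that $K$ is disjoint from $A_1 \cup A_2$ inside $S^3$: each $A_i$ is an unknot (being the axis of a rotation in $SO(4)$), whereas $K$ is a hyperbolic knot, so $K \neq A_i$; any transverse intersection $p \in K \cap A_i$ would be fixed by the stabilizer subgroup $G_i \subset G$ of $A_i$, which would also fix the tangent line to $K$ at $p$ since $K$ is smoothly embedded, forcing $K$ and $A_i$ to coincide near $p$ and hence globally, a contradiction. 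The main obstacle will be this last stage: passing from the topological extension of $G$ over $S^3$ to a linear isometric action via Geometrization, and verifying that both stabilizer subgroups $G_i$ are nontrivial so that both Heegaard cores are actually singular, which should ultimately invoke the four-torsion structure inherited by $P_{\orbQ_T}$ from $P_4$.
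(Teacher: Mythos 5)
Your route is genuinely different from the paper's --- you work upstairs, equivariantly filling $\Sth\setminus K$ along the meridian and linearizing the deck group action on $S^3$, whereas the paper works downstairs, citing \cite[Corollary 4.11]{BBoCWaGT} for the orbilens-space structure and then running a combinatorial analysis of the trivalent singular graph of $\orbQ_2$ inside its underlying ball --- but as written your argument has two genuine gaps. First, a finite \emph{abelian} subgroup of $SO(4)$ need not lie in a maximal torus: the group generated by $\mathrm{diag}(-1,-1,1,1)$ and $\mathrm{diag}(-1,1,-1,1)$ is abelian and lies in $SO(4)$, yet its three involutions fix three great circles all passing through $\pm e_4$, so the quotient of $S^3$ has a graph as singular locus and is not an orbilens space. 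Your deck group $G\cong P_{\orbQ_T}/P_K$ is a quotient of $\Z^2$ by a finite-index sublattice, hence a priori of the form $\Z/a\times\Z/ab$ and not obviously cyclic, so the step ``abelian $\Rightarrow$ conjugate into $SO(2)\times SO(2)$'' does not go through. Cyclicity of this cover is precisely the nontrivial input the paper imports from \cite[Corollary 4.11]{BBoCWaGT}; you would need to prove it, not assume it.

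Second, and more seriously, your argument at best shows $\Sigma(\orbQ_T)$ is contained in the two Hopf cores, i.e.\ has \emph{at most} two components, and you explicitly defer the verification that both stabilizers are nontrivial. That deferred step is the entire content of the claim: \Cref{prop:no_rhombus}(1), which the claim feeds into, requires $\Sigma(\orbQ_T)$ to consist of \emph{exactly} two embedded circles (its proof uses this to view $\orbQ_T\setminus\Sigma(\orbQ_T)$ as a knot complement in $S^1\times S^1\times I$ admitting a cosmetic surgery). The paper establishes ``exactly two'' by tracking how the peripheral $2$- and $4$-torsion of $\orbQ$ descends to edges of $\Sigma(\orbQ_2)$: each peripheral $4$-torsion edge either closes into a loop back to the cusp or terminates at a $D_4$ vertex, with $S_4$ excluded because $\orbQ_2$ is a dihedral quotient of $\Sth\setminus K$, and in each case one counts the components of the preimage in $\orbQ_T$ using \cite[Proposition 4.2 and Lemma 4.3]{Hoffman_hidden}. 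Some version of this torsion bookkeeping is unavoidable, so the proposal is incomplete at its crucial point. (A smaller issue: your tangent-line argument for $K\cap A_i=\emptyset$ fails for order-$2$ stabilizers, which act as $-I$ on the normal plane and therefore preserve every transverse line; the cleaner argument is that $G$ acts freely, by translations, on the cusp torus, so no element can fix a point of $A_i$ lying on that torus.)
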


\begin{proof}[Proof of Claim:]
Note, $\orbQ_2$ is an orbifold with underlying space a ball (see \cite[Proposition 2.3]{Hoffman_hidden} and \cite[Corollary 4.11]{BBoCWaGT}). The latter reference also shows that $\orbQ_2$ has two-fold cover $\orbQ_T$ which is a knot complement in an orbilens space and that $\orbQ_T$ is cyclically covered by $\Sth \setminus K$. Thus the there are $0,2$ or $ 4$ points fixed by non-cyclic isotropy groups of $\orbQ_2$ which are all dihedral since $\orbQ_2$ is a dihedral quotient of $\Sth \setminus K$.  
We note that just as in the proof of \cite[Lemma 4.3]{Hoffman_hidden}, the 4-torsion on the cusp is either connected via a loop back to the cusp (no internal vertices), $D_4$ or $S_4$. However, $S_4$ does not have a
dihedral subgroup of order 12, so if $S_4$ is an isotropy group connected to peripheral 4-torsion, then this would contradict the property that $\orbQ_2$ is a dihedral quotient of $\Sth \setminus K$. This leaves two cases to check.

{\bf Case 1:} If either set of points fixed by peripheral 4-torsion terminates at a point fixed by a $D_4$ group (a dihedral group of order $8$), then both such sets terminate at points fixed by $D_4$. Moreover, both $D_4$ isotropy groups have $D_2$ (the dihedral group of order 4) subgroups which lift to $\pi_1^{orb}(\orbQ_2)$. The peripheral $2$-torsion must connect to a dihedral group of order $2(p)$ where $p \in \Z$ in order to lift to a dihedral group in $\pi_1^{orb}(\orbQ_2)$. By cusp killing this group must be a $(2,2,p=2k+1)$ isotropy group. We observe that two conjugates of this group lift faithfully to $\pi_1^{orb}(\orbQ_2)$. In $\pi_1^{orb}(\orbQ_2)$, there are also two dihedral groups of order 4. Thus,  $\Sigma(\orbQ_T)$ consists of one circle fixed by 2-torsion and one circle fixed by odd torsion.

{\bf Case 2:} Otherwise,  the 4-torsion on the cusp is part of a loop. Thus, appealing to \cite[Proposition 4.2]{Hoffman_hidden}, we have that $\Sigma(\orbQ_T)$ is made up of two components. 
\end{proof}

  However, $\orbQ_T$ also has a rotationally rhombic cusp contradicting \Cref{prop:no_rhombus}.
Thus, $S^3 \setminus K$ cannot cover an orbifold with a $S^2(2,4,4)$ cusp.
\end{proof}

%
There is one knot complement $S^3 \setminus 12n706$ which is known to have cusp field $\Q(i)$ (see \cite[\S 7.1]{GHH}). This knot complement has become known as Boyd's knot complement because Boyd showed it decomposes into regular ideal tetrahedra and regular ideal octahedra. In fact, that polyhedral decomposition is the canonical cell decomposition. A simple analysis of this decomposition shows it does not support 4-torsion on the cusp. Of course, \Cref{thm:no_s244cusps} gives an alternate proof of this fact as well.

\section{Non-orientable cusp types.}\label{sect:non_or}

As noted in the introduction, it is well-established that four of the possible seventeen Euclidean 2-orbifolds show up as cusp types of orbifolds covered by knot complements. While \Cref{thm:no_s244cusps} rules out $S^2(2,4,4)$ cusps, it also rules out $D^2(4;2)$ and $D^2(;2,4,4)$ cusps as well since any orbifold with such a cusp type would have an orientation double cover with a $S^2(2,4,4)$ cusp. As all parabolic elements lift to the orientation double cover, a knot complement covering an orbifold with a  $D^2(4;2)$, or $D^2(;2,4,4)$ cusp would also cover an orbifold with a $S^2(2,4,4)$ cusp, giving the desired contradiction.

\begin{figure}
\begin{tabular}{ccc}

\begin{tikzpicture}
\draw[thick] (0,0) -- (2,0) -- (2,2) -- (0,2) -- (0,0);
\draw[thick] (0,0) circle (1mm);
\draw[thick] (0,2) circle (1mm);
\draw[thick] (2,0) circle (1mm);
\draw[thick] (2,2) circle (1mm);
\draw[thick]  (0,0) -- (0,1) node {R};
\draw[thick]  (2,0) -- (2,1) node {R};
\draw[thick]  (0,0) -- (1,0) node {R};
\draw[thick]  (0,2) -- (1,2) node {R};
\end{tikzpicture}
& 

\begin{tikzpicture}
\draw[thick] (0,0) -- (2,0) -- (2,2) -- (0,2) -- (0,0);
\draw[thick] (0,2) circle (1mm);
\draw[thick] (2,2) circle (1mm);
\fill[black] (1,0) circle (1mm);

\draw[thick,->]  (1,0) -- (1.5,0);
\draw[thick,->]  (1,0) -- (.5,0);
\draw[thick]  (2,0) -- (2,1) node {R};
\draw[thick]  (0,0) -- (0,1) node {R};
\draw[thick]  (0,2) -- (1,2) node {R};
\end{tikzpicture}
&
\begin{tikzpicture}
\draw[thick] (0,0) -- (2,0) -- (2,2) -- (0,2) -- (0,0);
\fill[black] (,0) circle (1mm);
\fill[black] (1,2) circle (1mm);
\draw[thick,->]  (1,0) -- (1.5,0);
\draw[thick,->]  (1,0) -- (.5,0);
\draw[thick,->]  (1,0) -- (1.6,0);
\draw[thick,->]  (1,0) -- (.6,0);
\draw[thick,->]  (1,2) -- (1.5,2);
\draw[thick,->]  (1,2) -- (.5,2);
\draw[thick]  (0,0) -- (0,1) node {R};
\draw[thick]  (2,0) -- (2,1) node {R};
\end{tikzpicture}
\\
a)  $D^2(;2,2,2,2)$  & b) $D^2(2;2,2)$ & c) $D^2(2,2;R)$\\
\begin{tikzpicture}
\draw[thick] (0,0) -- (2,0) -- (2,2) -- (0,2) -- (0,0);
\draw[thick,->]  (0,0) -- (1,0);
\draw[thick,->]  (0,2) -- (1,2);
\draw[thick]  (0,0) -- (0,1) node {R};
\draw[thick]  (2,0) -- (2,1) node {R};

\end{tikzpicture}

 & 
 \begin{tikzpicture}
\draw[thick] (0,0) -- (2,0) -- (2,2) -- (0,2) -- (0,0);
\draw[thick,->]  (0,0) -- (1,0);
\draw[thick,->]  (2,2) -- (1,2);
\draw[thick]  (0,0) -- (0,1) node {R};
\draw[thick]  (2,0) -- (2,1) node {R};

\end{tikzpicture}
& 

%
 \\
 d)   $T_R$ & e)  $K_R$ & \\ 
\end{tabular}

\caption{\label{fig:relevant_cusps} Schematics for 2-orbifolds that are possible cusp types discussed in \Cref{sect:non_or}. Edges marked with an `R' are fixed by reflection. Unmarked unfilled circles indicate corner reflectors fixed by $\Z/2\Z \times \Z/2\Z$ and unmarked filled circles indicate cone points of order 2. Otherwise the marking determines the cone point or cone reflector. }

\end{figure}

\begin{figure}

\begin{tikzpicture}
\draw [thick] (0,0) -- (120:2) -- (2,0) -- cycle;
\draw[thick,->]  (0,0) -- (1,0);
\draw[thick,->]  (0,0) -- (120:1);
\fill[black] (0,0) circle (1mm);
\draw[thick]   (2,0) circle (1mm);
\draw[thick]   (120:2) circle (1mm);
\draw (120:2.3)  node {3};
\draw (2.2,0)  node {3};
\draw[thick]  (60:1)  node {R};
\draw (-.3,-.3)  node[anchor=south] {3};
\end{tikzpicture}

\caption{\label{fig:d3_3} A diagram for the orbifold $D^2(3;3)$. }
\end{figure}

The figure 8 knot complement and the dodecahedral knot complements both cover tetrahedral orbifolds. The non-orientable tetrahedral orbifolds covered by these knot complements have either $D^2(;3,3,3)$ or $D^2(;2,3,6)$ cusps. 
The figure 8 knot complement also covers the Gieseking manifold which has a Klein bottle cusp and the figure 8 knot complement modulo its full symmetry group has a $RP^2(2,2)$ cusp. 

There is also a quotient of the figure 8 knot complement with a $D^2(3;3)$ cusp (see Figure \ref{fig:d3_3}), which we now describe. The non-orientable minimum volume tetrahedral orbifold commensurable with the figure 8 has volume $\frac{v_0}{24}$ (see for example \cite{NR92b}) and a fundamental group with presentation:

$$\Gamma = \langle a, b, c, d  | a^2,b^2,c^2,d^2,(ab)^6,(bc)^3,(ca)^2,(ad)^2,(bd)^2,(cd)^3 \rangle $$

which has $\Z/2\Z \times \Z/2\Z$ for its abelianization. There are three index two subgroups $\{\Gamma_1 =ker(\Gamma^{ab}/\langle \langle b \rangle \rangle,\Gamma_2= ker(\Gamma^{ab}/\langle \langle ab \rangle \rangle,\Gamma_3 = ker(\Gamma^{ab}/\langle \langle a \rangle \rangle\}$ of $\Gamma$. We point out that each of these subgroups corresponds to a 1-cusped cover of $\Hth/\Gamma$. The argument is as follows. Observe that $P= \langle a,b,c \rangle$ is the peripheral subgroup of $\Gamma$. Now consider $P_i = \Gamma_i \cap P$.  We then have  that $P_1= \langle (ab),(bc),(ca)  \rangle$ which corresponds to a $S^2(2,3,6)$ cusp, $P_2= \langle aba,b,c \rangle$ which corresponds to a $D^2(;3,3,3)$ cusp, and $P_3= \langle a, bc \rangle$ which corresponds to a $D^2(3;3)$ as cusp. Note that in each case $[P:P_i]=2$, so each $\Hth/\Gamma_i$ is 1-cusped as claimed.
An observation what will be utilized repeatedly in this section is that each $\Gamma_i$ contains every parabolic element of $\Gamma$. (In fact, each $\Gamma_i$ contains $\PSLTOTh$ as a subgroup.) Thus, any knot complement that covers $\Hth/\Gamma$ also covers $\Hth/\Gamma_i$. In particular, each quotient $\Hth/\Gamma_i$ is covered by the figure 8 knot complement. (A nearly identical argument applies to the dodecahedral knot complements).


For the five remaining cusp types (see \Cref{fig:relevant_cusps}), each one has a reflection which would extend to a reflection
 in the commensurator. More specifically, this reflection would extend to a reflection symmetry of the knot as established by the following:
 
 \begin{lem}\label{lem:reflections_as_symms}
 Assume $\Sth \setminus K$ covers a hypberbolic orbifold $\orbQ$ such that the cusp cross section of $\orbQ$ admits a reflection. 
 If all elements of the peripheral subgroup of $\orbQ$ are order 2 or have infinite order, 
 then
 $\Gamma_K =\pi_1(\Sth \setminus K) \triangleleft \Gamma_\orbQ= \pi_1^{orb}(\orbQ) $.  
 \end{lem}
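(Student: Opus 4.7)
The plan is to mimic the setup of Theorems \ref{thm:s236case} and \ref{thm:no_s244cusps}. Since $\orbQ$ and $\Sth\setminus K$ each have one cusp, we have $\Gamma_\orbQ = P\cdot \Gamma_K$, where $P$ is the peripheral subgroup of $\orbQ$; write $T_K = P\cap \Gamma_K$ and $T_P\le P$ for the maximal translation subgroup. As in those earlier proofs, combining Wirtinger generators $\mu_1,\dots,\mu_n$ for $\Gamma_K$ with generators of $P$ yields a presentation of $\Gamma_\orbQ$ in which the non-trivial conjugation relations take the form $p\,\mu_j\, p^{-1}=\gamma_{p,j}^{-1}$, with each $\gamma_{p,j}$ a parabolic of $\Gamma_\orbQ$. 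Normality of $\Gamma_K$ in $\Gamma_\orbQ$ reduces to showing that each such $\gamma_{p,j}$ already lies in $\Gamma_K$ as $p$ ranges over a generating set of $P$.

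My first step is to pass to the orientation double cover $\orbQ'$ of $\orbQ$. Since by hypothesis $P$ has no element of order $3$, $4$, or $6$, the cusp of $\orbQ'$ is an orientable Euclidean $2$-orbifold containing only $2$-torsion and translations, and hence is either $T^2$ or $S^2(2,2,2,2)$. In either case the cover $\Sth\setminus K\to\orbQ'$ is regular by \cite[Lemma 4]{ReidFig8} (equivalently, \cite[Lemma A.3]{baker2015poincare}), giving $\Gamma_K\triangleleft \Gamma_{\orbQ'}$.

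The remaining step is to promote this normality from $\Gamma_{\orbQ'}$ to $\Gamma_\orbQ$ by showing that conjugation by a reflection generator $r\in P\setminus \Gamma_{\orbQ'}$ preserves $\Gamma_K$. The conjugate $r\Gamma_K r^{-1}$ is a second normal subgroup of $\Gamma_{\orbQ'}$, abstractly isomorphic to $\pi_1(\Sth\setminus K)$ and with peripheral lattice $rT_K r^{-1}\subseteq T_P$; since $\Gamma_K=\langle\langle \mu\rangle\rangle_{\Gamma_{\orbQ'}}$ for any meridian $\mu$, showing $r\Gamma_K r^{-1}=\Gamma_K$ is equivalent to verifying that $\mu$ and $r\mu r^{-1}$ have the same normal closure in $\Gamma_{\orbQ'}$. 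I expect this last identification to be the main obstacle. The tool I would bring to bear is an analysis of the image of the meridian in $\Gamma_\orbQ^{ab}$ (which is invariant under $\Gamma_\orbQ$-conjugacy of $\mu$ and $r\mu r^{-1}$) combined with the characterization of knot-complement covers by peripheral data in the spirit of Reid, to conclude that both normal closures coincide with $\Gamma_K$.
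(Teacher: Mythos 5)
Your reduction matches the paper's in outline: pass to the orientation double cover $\orbQ'$ (whose cusp is $T^2$ or $S^2(2,2,2,2)$ by the torsion hypothesis), obtain $\Gamma_K \triangleleft \Gamma_{\orbQ'}$ from \cite[Lemma 4]{ReidFig8}, and then try to show that conjugation by a reflection $r$ representing the nontrivial coset preserves $\Gamma_K$. But that last step is the entire content of the lemma, and you leave it unproved: you correctly reformulate it as $\langle \langle \mu \rangle \rangle_{\Gamma_{\orbQ'}} = \langle \langle r\mu r^{-1} \rangle \rangle_{\Gamma_{\orbQ'}}$ and then only gesture at a strategy. Worse, the tool you propose --- the image of the meridian in $\Gamma_\orbQ^{ab}$ --- cannot close the gap: $\mu$ and $r\mu r^{-1}$ are conjugate in $\Gamma_\orbQ$, so their images in $\Gamma_\orbQ^{ab}$ agree for trivial reasons, and this says nothing about whether their normal closures inside the \emph{proper subgroup} $\Gamma_{\orbQ'}$ coincide.

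The paper closes this gap by interposing a torus-cusped group $\Gamma_T$ with $\Gamma_K \subset \Gamma_T \subset \Gamma_{\orbQ'}$, obtained from the translation subgroup of the cusp (for the cusp types $T_R$ and $K_R$ one takes $\Gamma_T = \Gamma_{\orbQ'}$ itself, which already has a torus cusp). This $\Gamma_T$ is preserved by $r$, and $\Gamma_K$ is a \emph{characteristic} subgroup of $\Gamma_T$: by the cyclic-cover structure coming from \cite{ReidFig8} and \cite{BBoCWaGT}, $\Sth \setminus K \to \orbQ_T$ is a cyclic cover of a knot exterior in an orbi-lens space, which pins down $\Gamma_K$ canonically inside $\Gamma_T$ (as the kernel of the natural map to $\Z/n\Z$ through $H_1(\Gamma_T)$ modulo torsion). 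Hence $r\Gamma_K r^{-1} = \Gamma_K$ follows at once. Some version of this ``characteristic in an intermediate torus-cusped group'' idea, or an equivalently canonical description of $\Gamma_K$ inside $\Gamma_{\orbQ'}$ that is manifestly invariant under conjugation by $r$, is the missing ingredient in your argument.
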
 
 
 \begin{proof}
 The assumptions imply that the cusp cross-section of $\orbQ$ appears as one of the five orbifolds listed in \Cref{fig:relevant_cusps} a) - e).

 If the cusp cross section of $\orbQ$ is $D^2(;2,2,2,2)$, $D^2(2;2,2)$ or $D^2(2,2;R)$, in each case, $\orbQ$ has an orientation double cover with a $S^2(2,2,2,2)$ cusp. Call this cover $\orbQ_2$. 
 Since all parabolic elements of $\Gamma_\orbQ$ are in $\Gamma_{\orbQ_2} = \pi_1^{orb}(\orbQ_2)$, $\Gamma_K \subset \Gamma_{\orbQ_2}$. Furthermore, we can decompose $\Gamma_\orbQ$ into cosets $\Gamma_{\orbQ_2}, r \Gamma_{\orbQ_2}$, where $r$ is a (fixed) reflection of the cusp cross-section $C$ of $\orbQ$. 
 Separately, we can denote by $\Gamma_T$ the index 2 subgroup of $\Gamma_{\orbQ_2}$ corresponding to a torus cusped orbifold $\orbQ_T$ covered by $\Sth \setminus K$. 
We have that $r\Gamma_T r^{-1}=\Gamma_T$ as $\Gamma_T$ is the subgroup of  $\Gamma_{\orbQ_2}$ generated by parabolic elements. 
We then observe that $r\Gamma_Kr^{-1} \subset \Gamma_T$. Also, if we let $\mu \in \Gamma_K$ denote the image of  
a meridian of $K$ in $C$, then (the normal closure) $\langle \langle \mu \rangle \rangle_{\Gamma_T} = \Gamma_K$ and 
 $\langle\langle r \mu r^{-1} \rangle  \rangle_{\Gamma_T}= r \Gamma_K r^{-1}$. Since $r \mu r^{-1} = \mu^{-1}$,  $\Gamma_K=r \Gamma_K r^{-1}$ and  $\Gamma_K \triangleleft \Gamma_\orbQ$.

 If the cusp cross section of $\orbQ$ is $T_R$ or $K_R$, then $\orbQ$ has an orientation double cover with torus cusp. We can call this cover $\orb_T$.  Using a reflection defined analogously as above, we still have $r\Gamma_Kr^{-1} \subset \Gamma_T$, and so this case is just a ``streamlined'' version of the previous argument.  
 \end{proof}

\begin{rem}\label{rem:notation}
We point out $T_R$ and $K_R$ in \Cref{fig:relevant_cusps} appear in the list of parabolic orbifolds in \cite[Theorem 13.3.6]{Th_notes} as {\emph annulus} and {\emph M\"obius band}, respectively. Also, $D^2(2,2;R)$ seems to correspond to `$(2;2;)$'.  Assuming the last case contains a typo (the first `;' should be a `,'), we have that for each case the reflection in boundary is implied.

If orbifolds with boundary were to be included in the list in \cite[Theorem 13.3.6]{Th_notes}, it would expand to include the \emph{seventeen} quotients by wallpaper groups 
and all \emph{seven} frieze group quotients, which would include the M\"obius band and annulus, so departing from Thurston's notation by using $T_R$ and $K_R$ is meant to avoid confusion.
\end{rem}

%
%

We now give a proof of \Cref{thm:possible_non_or_cusps}. This theorem gives a complete classification of the cusp types of quotients covered by knot complements in the sense that either a quotient can be ruled out from this list or it is realized as the cusp of a quotient of hyperbolic knot complement.

\begin{proof}[Proof of \Cref{thm:possible_non_or_cusps}]
First, \Cref{thm:no_s244cusps} eliminates the orbifolds with 4-torsion on the cusp (i.e. $S^2(2,4,4)$, $D^2(4;2)$ and $D^2(;2,4,4)$). Then, combining  \Cref{lem:reflections_as_symms} with Boileau, Boyer, Cebanu and Walsh's observation that hyperbolic knot complements cannot
admit a reflection symmetry \cite[Proof of Lemma 2.1(1)]{BoiBoCWa2} eliminates $D^2(;2,2,2,2)$, $D^2(2;2,2)$, $D^2(2,2;R)$, $T_R$, and $K_R$ as possible cusps. Finally, the other nine cusp types appear as cusps of orbifolds covered by the figure 8 knot complement. \end{proof}

\subsection{Realization of cusps covered by link complements.}\label{sub:link_realization}
We conclude by observing that every cusp type is realized in the quotient of a hyperbolic link complement. In light of \Cref{thm:possible_non_or_cusps} and the discussion above we will restrict our attention to the eight types of cusps not covered by knot complements. Namely, $T_R$, $K_R$, $D^2(2,2;R)$, $D^2(2;2,2)$, $D^2(;2,2,2,2)$, $S^2(2,4,4)$, $D^2(4;2)$, and $D^2(;2,4,4)$.

 
\begin{figure}

    \centerline{
\xymatrix{
&&& & \Sth \setminus L \ar[dd]^{3} \ar[dr]^{2}   & &\\ 
&&& && \orbQ_{T_R} \ar[dd]_2 \ar[ddr]_2 &\\
&&&&  \orbQ_3   \ar[ddddl]^{8} \ar[ddddll]^{8} & &\\ 
&&& & & \orbQ_{K_R} & \orbQ_{D^2(2,2;R)}\\
 &&&  & & &&\\
&&&  & & &&\\
&&\orbQ_{D^2(2;2,2)}\ar[d]_2 \ar[dl]_2 &  \Hth/\PSLTOone \ar[d]_2 \ar[dl]_2& & &&\\
 &\orbQ_{D^2(4;2)}\ar[drr]_2& \orbQ_{D^2(;2,2,2,2)}\ar[dr]^2& \orbQ_{S^2(2,4,4)} \ar[d]_2&&&&\\
 &&& \orbQ_{D^2(;2,4,4)} &&&
}}
\caption{\label{fig:s244quotients_for_links} Quotients of the Borromean rings complement. We point out $\orbQ_{K_R}$ and $\orbQ_{D^2(2,2;R}$ are 2-cusped, while $\orbQ_{T_R}$ is 3-cusped. Compare this figure to Table \ref{table:covers}}
\end{figure}

\begin{table}
\begin{tabular}{|c|c|c|}
\hline
Orbifold & Group & covering degree \\ 
\hline
\hline
$\Sth \setminus L$ & $\Gamma_L = \langle t_1t_2,  r t_1t_2 r^{-1}, r^{-1}t_1t_2r \rangle$  &1 \\
\hline
$\orbQ_3$ & $\langle \Gamma_L, r \rangle$ & 3 \\
\hline
$\Hth/\PSLTOone $ & $\langle \Gamma_L, r, c^2,d \rangle$  & 24 \\
\hline
$\orbQ_{D^2(2;2,2)}$ & $\langle \Gamma_L, r, c^2,\alpha_3 \rangle$ & 24\\
\hline
$\orbQ_{D^2(;2,2,2,2)}$ & $\langle \Gamma_L, r, \alpha_2,\alpha_3 \rangle$ & 48\\
\hline
$\orbQ_{D^2(4;2)}$ &$\langle \Gamma_L, r, c,\alpha_3 \rangle$   & 48 \\
\hline
$\orbQ_{S^2(2,4,4)}$ & $\langle \Gamma_L, r, c,d \rangle$  & 48\\
\hline
$\orbQ_{D^2(;2,4,4)}$ & $\langle \Gamma_L, r, \alpha_1, \alpha_2, \alpha_3 \rangle$ & 96\\
\hline

\end{tabular}
%
\caption{\label{table:covers} More detailed information about the quotients of $\Sth \setminus L$ which are described in terms of presentations their subgroups as of $\Gamma_{D^2(;2,4,4)}$. The orbifolds are index by  name gives the cusp type  with each cusp type. }
\end{table}

Conveniently, for each cusp type $C$,  we can  an orbifold $\orbQ_C$ with at least one cusp of that type that is covered by the Borromean rings complement.  

First, we will discuss $T_R$, $K_R$, and $D^2(2,2;R)$. The Borromean rings complement decompose into two octahedra. There is a symmetry of these two octahedra (see for example \cite{Purcell_intro} and \cite[\S 3]{HMW19}). The quotient of the Borromean rings complement by this symmetry is an orbifold with three $T_R$ cusps. Also the Borromean rings admits a glide-reflection along the planar component which exchanges the two crossing circle cusps. This can be observed directly or using SnapPy \cite{snappy}.
 The quotient of this glide-reflection with the previous symmetry will be a two cusped orbifold. The quotient of the planar cusp will have the gluing pattern of $K_R$, while the quotient of the two crossing circle cusps will remain $T_R$. Finally, we can compose the reflection with a strong involution to obtain a quotient with three $D^2(2,2;R)$ cusps.

We find the other quotients by considering small volume orbifolds in the commensurability class of the Borromean rings . The (full) symmetry group of the tessellation of 
$\Hth$ by regular ideal octahedra is given by $\Gamma_{D^2(;2,4,4)}$:

\begin{equation}
\begin{aligned}
\Gamma_{D^2(;2,4,4)}= \langle  \alpha_1, \alpha_2, \alpha_3, \alpha_4  | & \alpha_1^2,\alpha_2^2,\alpha_3^2,\alpha_4^2,(\alpha_2 \alpha_1)^4, (\alpha_3 \alpha_2)^2,(\alpha_1 \alpha_3)^4, \\
& (\alpha_4 \alpha_1)^2,(\alpha_4 \alpha_2)^2,(\alpha_4 \alpha_3)^3 \rangle.
\end{aligned}
\end{equation}

Letting $c=\alpha_2 \alpha_1, d=\alpha_3 \alpha_2, r=\alpha_4 \alpha_3 $, we also have that $\Gamma_{D^2(;2,4,4)}=  \langle  \alpha_2,c, d, r\rangle$. Here the definitions of $c$ and $d$ is consistent with $\S$\ref{sub:translation_structure244}. Specifically, in the sense that the group of translations in 
$P_{D^2(;2,4,4)} $ is generated by $t_1= c^2 d = (\alpha_2\alpha_1)^2\alpha_3\alpha_2$ and $t_2 = cdc =  \alpha_2\alpha_1\alpha_3\alpha_1$

With these conventions, the fundamental group of the Borromean rings is given by $$\Gamma_L = \langle t_1t_2, rt_1t_2r^{-1}, r^{-1}t_1t_2r \rangle.$$ The remainder of this argument just relies on the fact that $\Gamma_L$ is a link group, which can be observed directly from its abelianization $\Z^3$ and the fact that it is generated (also normal generated) by three peripheral elements.

We can then directly construct orbifolds that cover $\Hth/\Gamma_{D^2(;2,4,4)}$ which are in turn covered by $\Sth\setminus L$ using the presentations above. We will factor these covers through $\orbQ_3 \cong \Hth/\langle \Gamma_L, r \rangle$ which has one torus cusp and is the quotient of the Borromean rings complement by an order 3 symmetry. This makes the accounting easier as determining the covering degree from $\orbQ_3$ to any orbifold quotient is completely determined by the cover on the cusp. 

What we call $\Gamma_{S^2(2,4,4)}$ in Figure \ref{fig:s244quotients_for_links} is more commonly  is more well known as  $\PGLTOone$. Thus we have 
$$\Gamma_{S^2(2,4,4)} = \langle c, d, r \rangle = \langle c, d, r, \Gamma_L \rangle \cong \PGLTOone.$$ 

To verify the details of this argument, we can use the identification $c \mapsto  \begin{pmatrix} \zeta_8^{-1} & 0 \\ 0 & \zeta_8 \end{pmatrix}$, $d \mapsto  \begin{pmatrix} i & i \\ 0 & -i \end{pmatrix} $, $r \mapsto  \begin{pmatrix} 1 & 1 \\ -1 & 0 \end{pmatrix}$ where $\zeta_8 = e^{i\pi/4}$. With these identifications, $t_1 \mapsto  \begin{pmatrix} 1 & -1 \\ 0 & 1 \end{pmatrix}$, 
 $t_2\mapsto  \begin{pmatrix} 1 & i \\ 0 & 1 \end{pmatrix}$, and $\alpha_2$ is the reflection that coincides with complex conjugation.

 \begin{figure}
 \begin{subfigure}[t]{.4\textwidth}
  \centering
 \resizebox{1.4in}{!}{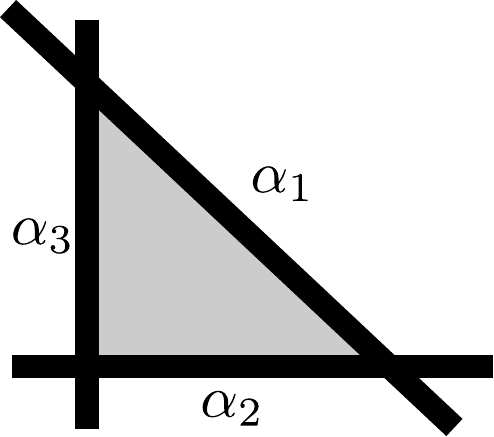}
   \caption{A fundamental domain for $P_{D^2(;2,4,4)}$ with the sides labeled by reflections $\alpha_1,\alpha_2,\alpha_3$}
  \label{fig:d224}
   \end{subfigure}
 \begin{subfigure}[t]{.4\textwidth}
 \resizebox{1.4in}{!}{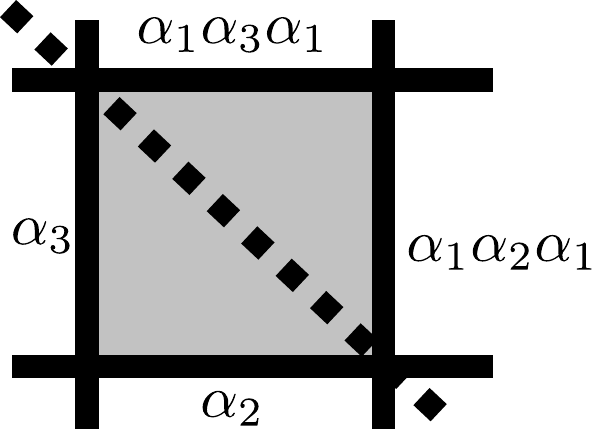}
   \caption{A fundamental domain for $P_{D^2(;2,2,2,2)}$}

  \centering
   \end{subfigure}
   \begin{subfigure}[t]{.4\textwidth}
  \centering
 \resizebox{1.4in}{!}{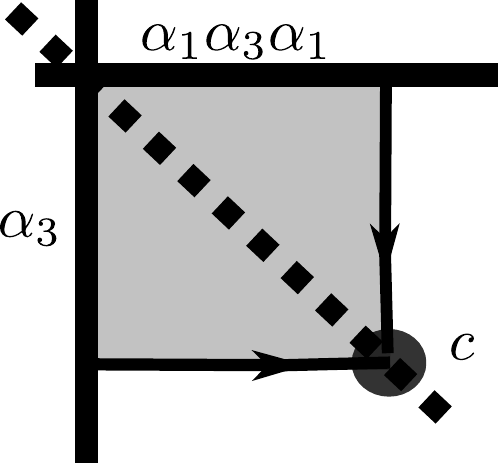}
   \caption{A fundamental domain for $P_{D^2(4;2)}$}
  \label{fig:d4r2}
    \end{subfigure}
   \begin{subfigure}[t]{.4\textwidth}
  \centering
 \resizebox{1.4in}{!}{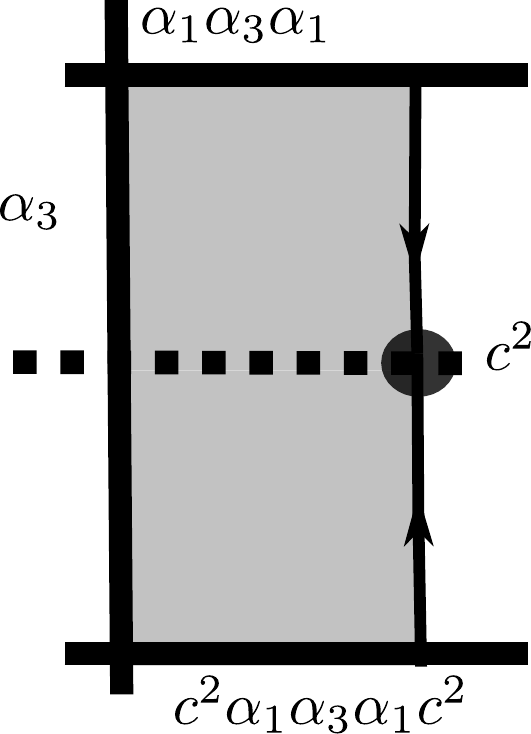}
   \caption{A fundamental domain for $P_{D^2(2;2,2)}$}
  \label{fig:d2r22}
   \end{subfigure}
 
 \caption{\label{fig:OtherRealizations} Realizations of covers of $D^2(;2,4,4)$ by $D^2(4;2)$, $D^2(;2,2,2,2)$ and $D^2(2;2,2)$}
 \end{figure}

We now consider the abelianization $f: \Gamma_{D^2(;2,4,4)} \rightarrow \Z/2\Z \times \Z/2\Z \times \Z/2\Z $. 
Here the peripheral subgroup is given by $P_{D^2(;2,4,4)} = \langle\alpha_1, \alpha_2, \alpha_3 \rangle$. We point out that image of $P_{D^2(;2,4,4)}$ under the abelianization is also $\Z/2\Z \times \Z/2\Z \times \Z/2\Z$. Thus, $P_{D^2(;2,4,4)}$ surjects $\Gamma_{D^2(;2,4,4)}^{ab}$. Also, for any abelian quotient both $r$ and $t_1 t_2$ are trivial. These observations show that any abelian quotient of  $\Gamma_{D^2(;2,4,4)}$ contains both $\langle r, \Gamma_L \rangle$ and $\Gamma_L$. 

We now have that $$\Gamma_{D^2(4;2)} = f^{-1}(\langle f(\alpha_2\alpha_1), f(\alpha_3)\rangle) = \langle \Gamma_L, r, \alpha_3, c \rangle,$$ $$\Gamma_{D^2(;2,2,2,2)} =f^{-1}(\langle f(\alpha_2), f(\alpha_3)\rangle) = \langle \Gamma_L, r, \alpha_2, \alpha_3 \rangle,$$ and 
$$\Gamma_{D^2(2;2,2)} =f^{-1}(\langle f(\alpha_3)\rangle) = \langle \Gamma_L, r, c^2, \alpha_3 \rangle.$$  These covers are pictured in Figure \ref{fig:OtherRealizations}.


%
%


We summarize the constructions in this section with the following proposition.

\begin{prop}\label{prop:link_quotients}
For every Euclidean 2-orbifold, there is a hyperbolic link complement which covers an orbifold with at least one cusp of that type. 
\end{prop}

\bibliographystyle{plain}
\bibliography{hidden_bib}

\end{document}